 \newtheorem{theorem}{Theorem}[section]
 \newtheorem{corollary}[theorem]{Corollary}
 \newtheorem{proposition}[theorem]{Proposition}
 \newtheorem{lemma}[theorem]{Lemma}
 \theoremstyle{definition}
 \newtheorem{definition}[theorem]{Definition}
 \newtheorem{example}[theorem]{Example}
 \newtheorem{remark}[theorem]{Remark}
\newtheorem{convention}[theorem]{Convention}
\numberwithin{equation}{section}
\theoremstyle{theorem}
\newtheorem*{teo}{Theorem}
 \newcommand{\FQ}{\mathbb{F}_{q^2}}
 \newcommand{\Fq}{\mathbb{F}_q}
 \newcommand{\NN}{\mathbb{N}}
 \newcommand{\bc}{\mathbf{c}}
 \newcommand{\Tra}{\mathrm{Tr}}
 \newcommand{\No}{\mathrm{N}}
 \newcommand{\rk}{\mathrm{rk}}
 \newcommand{\In}{\mathrm{In}}
 \newcommand{\LM}{\mathrm{LM}}
 \newcommand{\supp}{\mathrm{Supp}}
 \newcommand{\He}{\mathcal{H}}
 \newcommand{\xx}{\mathcal{X}}
 \newcommand{\y}{\mathcal{Y}}
 \newcommand{\cN}{\mathcal{N}}
 \newcommand{\B}{\mathcal{B}}
 \def\ecr{\color{black}}
\begin{document} 	
 	\title{Hermitian codes and complete intersections}
 	
 	\author{Chiara Marcolla}
 	\address{\textnormal{Chiara Marcolla}. Dipartimento di Matematica dell'Universit\`{a} di Torino\\ 
 		Via Carlo Alberto 10, 
 		10123 Torino, Italy}
 	\email{{chiara.marcolla@gmail.com}}

 	\author{Margherita Roggero}
 	\address{\textnormal{Margherira Roggero}. Dipartimento di Matematica dell'Universit\`{a} di Torino\\ 
 		Via Carlo Alberto 10, 
 		10123 Torino, Italy}
 	\email{{margherita.roggero@unito.it}}

 	\keywords{Hermitian code, minimum-weight codeword, complete intersection}
 	\subjclass[2010]{11G20,11T71} 
 	
 	
 	\begin{abstract}

In this paper we consider the Hermitian codes defined as the dual codes of one-point evaluation codes on the
Hermitian curve $\mathcal H$ over the finite field $\FQ$. We focus on those with  distance $d \geq q^2-q$ and  give a geometric description of the support of  their  minimum-weight codewords. \\
We consider the unique writing $ \mu q + \lambda (q+1)$ of the distance $d$ with  $\mu, \lambda$ non negative integers, and $\mu \leq q$, and  consider all the  curves  $\mathcal X$  of the affine plane $\mathbb A^2_{\FQ}$  of degree $\mu+\lambda$  defined by  polynomials  with $x^\mu y^\lambda$ as  leading  monomial   w.r.t. the $\texttt{DegRevLex}$ term ordering (with $y>x$).   We  prove that  a zero-dimensional subscheme $Z$ of $\mathbb A^2_{\FQ}$ is the support of  a  minimum-weight codeword  of the Hermitian code with distance $d$ if and only if it  is made of $d$   simple $\FQ$-points and   there is a curve $\mathcal X$ such that   $Z$ coincides with the scheme theoretic intersection $\mathcal H \cap  \mathcal X$ (namely,  as  a cycle,   $Z=\mathcal H \cdot \mathcal X$).
Finally, exploiting  this geometric characterization, we propose an algorithm to compute the number of minimum weight codewords and  we present comparison tables between our algorithm and  MAGMA command \texttt{MinimumWords}.
\end{abstract}
\maketitle


\section{Introduction}

Let $q$ be a power of a prime. The \textit{Hermitian curve} $\He$ is the affine, plane curve defined  by the polynomial $x^{q+1}=y^q+y$. It is a smooth curve of genus $g=\frac{q^2-q}{2}$ with only one point at infinity.
The curve $\He$ is one of the best known example of  \emph{maximal curve}, that is with  the  maximum number of $\FQ$-points allowed by the Hasse-Weil  bound \cite{CGC-alg-art-rucsti94}. 

Starting from the Hermitian curve and any positive integer $m$, it is possible to construct the geometric one-point Goppa code $C_m$ on $\He$, that is called \textit{Hermitian code}. This  is by far the most studied among geometric Goppa codes, due to  the  good properties of the Hermitian curve. 

 For a thorough exposition of the main features of Hermitian curves and codes we refer to  \cite{CGC-cod-book-hirschfeld2008algebraic} and to  Section 8.3 of \cite{CGC-cd-book-stich}.

\medskip

 In 1988, Stichtenoth \cite{stichtenoth1988note}  describe generator and parity-check matrices of  Hermitian codes, introduced one year before by van Lint, Springer \cite{van1987generalized} and Tiersma \cite{tiersma1987remarks}.  Stichtenoth, for any $m > q^2-q-2$, finds a formula for the distance $d$ of $C_m$. A few years later,  Yang and Kumar \cite{yang1992true}   bring to completion Stichtenoth work finding  the distance of  the remaining codes  $C_m$.  Finally, in 1995
 Kirfel and Pellikaan \cite{kirfel1995minimum} presented a different and much shorter proof of these results using linear algebra and the theory
 of semigroups. Since then, the results were extended
 to two-point Hermitian codes by Homma and Kim  \cite{homma2005toward,homma2006complete,homma2006two,homma2006twob} exploiting a similar proof to \cite{yang1992true}.
  After that, in 2010 Park \cite{park2010minimum}, using a method based on \cite{kirfel1995minimum},  gives a short and easy proof of the same results of Homma and Kim and obtains a geometrical characterization of minimum weight codewords as multiplications of conics and lines. 
  The description of minimum-weight codewords of some  two-points Hermitian codes have been found three years later by Ballico and Ravagnani \cite{ballico2013dual} using different techniques. 
Moreover,  Yang \cite{yang1994weight} and Munuera \cite{munuera1999second} obtain   the values of many \textit{generalized Hamming weights} also called \textit{weight hierarchies}. Finally, Barbero and Munera  \cite{barbero2000weight}   find the complete sequence of weight hierarchies of Hermitian codes by an exhaustive computation of the bounds given by Heijnen and Pellikaan \cite{heijnen1998generalized}.
Other results concerning generalized Hamming weights of codes on Hermitian curve and the \textit{relative} generalized Hamming weights  can be found i.e. in \cite{ballico2016higher,geil2014relative,homma2009second,lee2015bounds}.\\

In \cite{CGC-cd-book-AG_HB} the Hermitian codes are seen as a  sub-family of evaluation codes. Using this  different approach, the authors   divide the  codes $C_m$ in four \textit{phases} with respect to the integer $m$, and  for each of them give 
explicit formulas linking  dimension and  distance. In this paper we adopt  their classification of Hermitian codes in four phases (with minor changes,  as summarized in Table~\ref{Tab}) and,  as in \cite{CGC-cd-book-AG_HB}, we consider the Hermitian code as the dual of one-point evaluation code 
(Definition \ref{def:codice}).\\ 

Later on, the  research about Hermitian codes branches out in several different lines. Some  papers, as for instance \cite{lee2009list,lee2010algebraic,CGC-cd-prep-manumaxchiara12,o2000decoding}, deal with the problem of finding efficient algorithms for the decoding of the Hermitian codes. 

An hard problem is that of determining  the weight distribution, in particular the small-weight distribution. So far, few partial results are known, and the first of them   appears only in 2011 (\cite{CGC-cod-art-marcpell2011weights}).\\
The geometric characterization of the small-weight  codewords of the Hermitian code $C_m$ for  a few  cases of $m$ (mainly in the first and second phase)  can be found in \cite{CGC-cod-art-ballico2012geometry,CGC-cod-art-couvreur2012dual, CGC-cod-art-fontanari2011geometry,CGC-cd-art-marcolla2015small, CGC-cod-art-marcpell2011weights}. 
In particular in \cite{CGC-cod-art-marcpell2011weights}    and   \cite{CGC-cd-art-marcolla2015small}, the first author of this paper and her co-authors  study Hermitian codes $C_m$ with distance $d\leq q$, that is with  $m\leq q^2-2$ (first phase). They prove that  the points of the support of  any minimum-weight codeword of $C_m$ lie in the intersection between a line and $\He$;  
on the other hand, any  set of $d$ points   in such a \emph{complete  intersection}   corresponds to   minimum-weight codewords.
This characterization allows the authors to compute  the number of minimum-weight  codewords.

In 2012,  Couvreur   \cite{CGC-cod-art-couvreur2012dual}  investigate the minimum-weight codewords problem for codes over an affine-variety $\xx $  by a new method.  Quoting Couvreur paper \emph{the approach is based on problems \'a la Cayley-Bacharach and consists in describing the minimal configurations of points on $\xx $ which fail to impose independent conditions on forms of some degree.}

As an application of this approach,  in \cite{CGC-cod-art-fontanari2011geometry} the authors find  a geometric characterization of small-weight codewords of $C_m$ for some $m$ and $d\leq 3q-6$ (first and second phase). In particular they
prove that the set of  points that are the support of a minimum-weight codeword (or a subset of them) is a cut on  $\He$ by  either a line, or a conic, or the complete intersection of two curves of degrees $q-2$ and $3$. A similar result is found for all codewords of the first phase having weight $v\leq 2d-3$.

A new proof of the above results and some new information about the small-weight codewords of Hermitian codes $C_m$ with $m \leq q^2+q$ and $d=2q+2,2q+1,2q$ or $d\leq q$ are presented in  \cite{CGC-cod-art-ballico2012geometry}.

So, up to now, the geometric descriptions  of minimum weight codewords that we  can find in literature  concern only  a  few Hermitian codes, mainly those in the first and second phases  (namely, with $m\leq 2q^2-2q-3$): note that, for each value of $q$,  these two phases cover only  a small fraction  (about  $\frac2q$) of all the Hermitian codes.

In this paper we provide a geometric characterization for minimum-weight codewords of  all the  Hermitian codes $C_m$   in the remaining two  phases. Our main result is the following theorem.

\begin{teo}[Theorem \ref{curvagenerica}] 
Let $C_m$ be an Hermitian code with $m\geq 2q^2-2q-2$ and distance $d=m-q^2+q+2$. Let $\lambda, \mu$ be the unique pair of non-negative integers such that $d=\mu q +\lambda (q+1)$ and $\mu \leq q$ and let   $D$ be a divisor on the Hermitian curve $\He$  made of simple points with coordinates in $\FQ$.\\
Then $D$ is the support of  a minimum-weight codeword of $C_m$  if and only if   it is  the complete intersection of $\He$  and a curve $\xx$    defined by a polynomial  $F$ 
of the following type:  
\begin{equation} \label{desF} F= x^\mu y^{\lambda}+ \Sigma  a_i x^{r_i} y^{s_i} \hbox{ where  either  }  r_i+s_i< \mu + \lambda \hbox{  or }   r_i+s_i= \mu + \lambda  \hbox{ and }r_i <\mu .\end{equation}
\end{teo}

 Besides the theoretical interest of this geometric description, we note that all  the properties that are  involved can be easily checked through standard tools of computational algebra,  as for instance those concerning the  ideal membership. In fact we can reformulate the above Theorem as follows (Corollary \ref{corollfin})

\medskip

{\it A zero-dimensional subscheme  $D$  of $\He$ is the support of  a minimum-weight codeword of $C_m$ if and only if the ideal $I_D$ of $D$  is generated by $H$ and a polynomial   $F$   as in \eqref{desF} and it contains  $x^{q^2}-x$ and $  y^{q^2}-y$.}
\medskip

 Exploiting this explicit  characterization,  we propose  Algorithm~\ref{Alg} to compute the number of minimum weight codewords. We implemented this algorithm with MAGMA software and we compare its performance with those of the command \texttt{MinimumWords} already present in MAGMA  for some codes of the third and fourth phase (i.e. Hermitian codes with $m\geq 2q^2-2q-2$) with $q=3$ and $q=4$.
Depending on the code the two algorithm have different performances, in general our algorithm is more convenient for lower $m$ (see Table \ref{Tab.q3.3fase}). A striking case is that with  $q=4$ and $m=22$: our algorithm  computes the number of minimum-weight codewords  in $85.87$ seconds  (they are  $150000$), while \texttt{MinimumWords}  declares that termination requires $10^8$ years.\\

To achieve these results we use a different term ordering with respect to what can be usually found in literature, that is the $\mathtt{DegRevLex}$ with $y>x$. 
We exploit tools such as the Hilbert polynomial and the so called sous-\'escalier. This last tool, together with Gr\"obner basis, has already been exploited in a slightly different way in \cite{geil2008evaluation,geil2000footprints,geil2017bounding} to establish or to determine some affine-variety code parameters.

We also point out that analogous results for codes of the other phases, namely  codes  $C_m$  with  $m<2q^2-2q-2$,  has been obtained by the authors in \cite{marcollaroggero2016minimum}. 
Finally, a generalization to codewords of small weight is in progress and we are confident that, from this strong geometric characterization, also  the explicit computation of the weight distribution will follow.\\ \ecr

The paper is organized as follows:
\begin{itemize}
%
%
\item In Section \ref{Sec.pre} we recall some basic definitions  about Hermitian curves and codes and we discuss our non-standard choice of  the term ordering. 

\item In Section \ref{Sec.prere} we focus on the  Hermitian codes $C_m$ with $ m\geq q^2-1$ and prove some preliminary results  that we will exploit in the following sections. 

\item In Section \ref{Sec.dist} we study the Hermitian codes $C_m$ with $ m\geq 2q^2-2q-2$.  The main result of this section is Theorem~\ref{diseguaglianza} which will be key tools in the final section. In some sense it generalize the classical results by Stichtenoth \cite{stichtenoth1988note} about the distance formula. Indeed, in Corollary \ref{cor.dist} we recover this same formula  as a special case of what is proved in Theorem~\ref{diseguaglianza}.

\item In Section \ref{Sec.MinWord} we state and prove Theorem \ref{curvagenerica}, which gives a geometric characterization for any minimum-weight codewords of the Hermitian codes $C_m$ with $ m\geq 2q^2-2q-2$.
\item In Section \ref{Sec.alg} we propose Algorithm \ref{Alg} to compute the number of minimum-weight codewords and we present comparison tables (Tables \ref{Tab.q3.3fase},\ref{Tab.q4.3fase}) with MAGMA command \texttt{MinimumWords}.
\item At the end  we draw the conclusions.
\end{itemize}


\section{Generalities and introduction to codes} \label{Sec.pre}

\subsection{The geometric setting}

Let $\FQ$ be the finite field with $q^2$ elements, where $q$ is a power of a prime, and let $K$ be its algebraic closure.
For any ideal $I$ in the polynomial ring $\FQ[x,y]$ 
we denote by $\mathcal{V}(I)$  the corresponding subscheme of $\mathbb A_{\FQ}^2$;   if $g_1,\ldots,g_s\in\FQ[x,y]$, we denote by $\langle g_1,\ldots,g_s\rangle$ the ideal they generate.  If $Y$ is a subscheme of  $\mathbb A_{\FQ}^2$, we will denote by $I_Y$ the ideal such that $Y=\mathcal V(I_Y)$ and by   $A_{Y}$ the coordinate ring  of $Y$, namely the quotient   $\FQ[x,y]/I_Y$. \\ 
 Though we are mainly interested in the closed points with coordinates in $\FQ$ ($\FQ$-points for short), we will  not identify a subscheme of $\mathbb A_{\FQ}^2$  with the set of its  $\FQ$-points, or, more generally,   a scheme with its support.   For the sake of simplicity, we sometimes  consider the  reduced schemes  as the set of their irreducible components; for instance a  zero-dimensional and reduced scheme   $Z$, can be considered as   the set of its (closed) points  and denoted $Z=\{Q_1, \dots, Q_s\}$.  

If $Z$ is a zero-dimensional subscheme of  $\mathbb A_{\FQ}^2$,  we will say that it is a {\it divisor}  on a curve $\mathcal X$ if $I_{\mathcal X} \subset I_Z$. Moreover, we will say that $Z$  it is a {\it complete intersection}  of $\mathcal X$ with another curve $\mathcal Y$ if $I_Z=I_\mathcal X+I_\mathcal Y$; in this case  we can also write $Z$ as $\sum m_i Q_i$ where  $m_i$ is  the intersection multiplicity of $\mathcal X$ and $\mathcal Y$ at $Q_i$.

\subsection{The Hermitian curve}

The \textit{Hermitian curve} $\mathcal H$ is the curve in the  affine plane $\mathbb{A}_{\FQ}$ defined by the polynomial $H:=x^{q+1}-y^q-y$. We will denote by  $I_\He$   the ideal in $ \FQ[x,y]$  generated by $H$ and by   $A_\He$ the coordinate ring $\FQ[x,y]/I_\He$ of $\He$.   

The curve $\He$ has genus $g=\frac{q(q-1)}{2}$ and  contains exactly $n:=q^3$  $\FQ$-points that we will denote by $P_1, \dots, P_n$  \cite{CGC-alg-art-rucsti94}.   We will always denote   by   $E$  the reduced zero-dimensional scheme  $\{P_1, \dots, P_n\}$, or, equivalently, the divisor  $\Sigma P_i$ on $\He$.

The projective closure $\overline \He$ of $\He$ in $\mathbb P^2_K$ contains   only one more point   $P_{\infty}=[0:0:1]$, which is simple and  $\FQ$-rational, so that $\overline \He$  has $q^3+1$  $\FQ$-points.

We recall that the \textit{norm} $\mathrm{N}$ and the \textit{trace}
	$\mathrm{Tr}$ are two functions from $\mathbb{F}_{q^2}$ to
	$\Fq$ such that
	$\mathrm{N}(x)=x^{1+q} \mbox{ and }
	\mathrm{Tr}(x)=x+x^{q}.$
Observe that  $\He=\{\No(x)=\Tra(y)\mid x,y\in\FQ\}$.\\

\begin{remark}\label{griglia}
In the proof of the key results of this paper we will exploit  the beautiful arrangement of the set of  $\FQ$-points of the Hermitian curve, due to its {\it norm-trace} equation. There are $q^2$ vertical lines, each containing $q$ points of $E$, and there are $q^2-q+1$ horizontal lines  each containing $q+1$ points of $E$. 
See \cite{CGC-alg-book-hirschfeld1998projective} or Section~2.3 of \cite{CGC-cd-art-marcolla2015small}.
\end{remark} 
\begin{definition}  \label{defdivisori}  An  \textbf{$\FQ$-divisors on  the Hermitian curve} is a divisor $D=\sum_{i=1}^\delta Q_{i}$ where the $Q_{i}$'s are pairwise   distinct $\FQ$-points of $\He$. We will denote by   $\vert D\vert$   the degree $\delta $ of $D$. We can also write $D=\{Q_1, \dots, Q_\delta\}$. 
\end{definition}

 Important examples of $\FQ$-divisor over the Hermitian curve  are $E=\{P_1, \dots, P_n\}$, which is  the largest  one  (so that  $D$ is  a  $\FQ$-divisor on $\He$  if and only if   $D\subseteq E$) and those cut on $\He$ by any line $\mathcal L_i$, $\mathcal L_j'$ or Remark \ref{griglia}. 

\begin{remark}\label{rm:eqcampo}  We recall that, in the above notations, the ideal  $I_E$ is generated by $  H,x^{q^2}-x, y^{q^2}-y $.  
Moreover,  a zero-dimensional subscheme $D$    of $\mathbb A^2_{\FQ}$ is a  $\FQ$-divisor on $\He$ if and only if  $I_D$ contains $I_E$  or, equivalently,  if and only if  $A_{D}$ is a quotient of  $A_{E}=\FQ[x,y]/I_E$. 
Therefore,    $D$  is a  $\FQ$-divisor on $\He$ if and only if  $I_D$ contains the polynomials  $  H,x^{q^2}-x, y^{q^2}-y $  (see for instance  \cite{CGC-alg-art-seidenberg1}). 
%
We will exploit this characterization of $\FQ$-divisors   in the construction of Algorithm~\ref{Alg}.

\end{remark}

\subsection{A quick sketch on the affine--variety codes}

Let $C$ be a linear code over $\FQ$ with \textit{generator matrix} $G$.\\
We recall that the \textit{dual code} $C^{\perp}$ of $C$ is formed by all vectors $\mathbf v$ such that $G\mathbf v^T=0$ and a generator matrix of $C^\perp$ is called a \textit{parity-check matrix} of the code $C$. Moreover if $\mathbf c=(c_1, \dots,c_n)\in C$ is a codeword, then the \textit{weight} of $\mathbf c$ is the number of $c_i$ that are different from $0$, whereas, its {\it support}   $\supp(\mathbf c)$  is the set of indices  corresponding to the non-zero entries. In our case, the entries of a codeword $\mathbf c$ are labeled by the $\FQ$-points of the Hermitian curve and we will identify its support   with   the  $\FQ$-divisor $D$ which  is the  set of points that correspond to the non-zero  entries:  we will say for short that 
\textit{$D$ is the divisor on $\He$ corresponding to the codeword $\mathbf c$}.
 Finally, the {\it distance} of a code is the minimum weight of its non-zero codewords.\\

We now briefly recall the definition of affine--variety codes of which the Hermitian codes are  special cases. For more results on affine-variety codes see \cite{geil2008evaluation}.\\ 

Let us consider an ideal $I\subset \FQ[x,y]$ such that $\{x^{q^2}-x,y^{q^2}-y\}\subset I$. Then $I$ is zero-dimensional and radical (Remark \ref{eqcampo}). If  $\mathcal{V}(I) = \{Q_1, \dots, Q_r\}$,  
 the \textit{evaluation map} $\phi_I$ is defined in the following way:
\begin{equation}\label{eval}
  \begin{array}{rccl}
    \phi_I :R= & \FQ[x,y]/I & \longrightarrow & (\FQ)^r\\
    & f & \longmapsto & (f(Q_1),\dots,f(Q_r)).
  \end{array}
\end{equation}

\begin{definition}
Let $L\subseteq R$ be an $\FQ$-vector subspace of $R$.
 The {\bf affine--variety code} $C(I,L)$ is the image $\phi_I(L)$ and the
affine--variety code $C(I,L)^{\perp}$ is its dual code.
\end{definition}

We fix an order for the points $Q_1, \dots, Q_r$ and an ordered  basis 
$f_1,\dots,  f_s$  of $L$. Then $C(I,L)$ is given by the matrix $G$, called \textit{generator matrix} of the code, having in the $(i,j)$ -position the evaluation of $f_i$ in the point $Q_j$.\\

 In this paper, the Hermitian codes are seen as dual codes of special subspaces.  We associate to the Hermitian curve 
 $\He$ defined by $H=x^{q+1}-y^q-y$ the weight vector $w:=[w(x)=q, w(y)=q+1]$, so any monomial $x^ry^s$ has weight-degree $w(x^ry^s)=rq+s(q+1)$. 
 For any fixed  positive integer $m$,  we denote by $V_m$ the subspace of $A_E$ generated by  (the classes of)  the monomials of weight degree less or equal than $m$. We denote by $C_m$ the corresponding  Hermitian code (that is a dual code).
Note that,  usually, the set of monomials having weight degree $\leq m$ are not linearly independent. For this reason, we  select a  suitable set  of monomials $\mathcal B$ (a basis for $A_E$) such that, for every $m$,   $\mathcal B \cap V_m$ is a basis for $V_m$. In this way the parity-check matrix of any code $C_m$ has exactly $n-k$ rows where $k$ is the dimension of the code. To choose this basis it is convenient to use a term ordering.%

\subsection{Term ordering and  Hermitian codes}

We recall that for any given term ordering $\prec$ in a polynomial ring $S$  every polynomial
$F\in S$ has a unique \textit{leading monomial} $\LM_\prec(F)$, that is the $\prec$-largest monomial which occurs with nonzero coefficient in the expansion of $F$. If $I$ is an ideal in $S$,  the \textit{initial ideal} $\In_\prec(I)$ is the ideal generated by the leading monomials of all the polynomials in $I$, that is 
$$\In_\prec(I) =\langle LM_\prec(F) \mid F\in I \rangle.$$

The two ideals $I$ and  $\In_\prec(I)$ share a same monomial basis of the quotients given by the \textit{sous-\'escalier} $\cN(I)$ of $\In_\prec(I)$, which is the set all monomials that do not belong to $\In_\prec(I)$. \\

In this paper  we  use the \textit{graded reverse lexicographic ordering} $\mathtt{DegRevLex}$.  We recall that in the  polynomial ring $k[z_1, \dots, z_n]$,  with variables ordered as   $z_1< \dots <z_n$,  
we have  $z_1^{r_1}\cdots z_n^{r_n} <_{\mathtt{DegRevLex}} z_1^{s_1}\dots z_n^{s_n}$, if ether $\Sigma r_i<\Sigma s_i$ or $\Sigma r_i=\Sigma s_i$ and  $r_i > s_i$ with $i$ minimum index   such that $r_i\neq s_i$.

\begin{remark}  As well known,   we can read many important  geometric features of the   scheme (either affine or projective) defined by an ideal $I$  through those of the scheme defined by  the initial ideal  $\In_\mathtt{DegRevLex}(I)$, while the use of  other term orderings can  involve a significant lost of geometric information.   In fact, the ideals $I$ and $\In_\mathtt{DegRevLex}(I)$ share the same   Hilbert polynomial, so that they also share all the geometric invariants encoded in this   polynomial:  dimension,  degree, algebraic genus in the case of curves, and so on. 

  The term ordering we usually find in  literature about Hermitian codes is the weighed term ordering  $<_w$ associated to the weight vector  $w=[q ,q+1]$ (and $\mathtt{Lex}$ with $x<y$ as a \lq\lq tie-breaker\rq\rq).  More precisely:
$$
x^ry^s <_w x^{r'}y^{s'} \iff \left\{\begin{array}{l}
   rq+s(q+1)< r'q+s'(q+1) \hbox{  \ \ or }\\
  rq+s(q+1) = r'q+s'(q+1) \mbox{ and }  s<s'
\end{array}\right.$$ 

Our choice of a  different term ordering, and in particular the choice of $\mathtt{DegRevLex}$ is due to the following two reasons.

First,  the leading monomial  w.r.t. $\mathtt{DegRevLex}$  of the polynomial $H$ of  the Hermitian curve $\He$  is the  monomial $x^{q+1}$ that defines a (non-reduce) curve of the same degree as $\He$ (while  $LM_{<_w}(H)=y^q$ has a lower degree).   Second, in this paper we deal with  curves in the affine plane and also with their    projective closure   in the projective plane; the term ordering  $\mathtt{DegRevLex}$ does not substantially modify the leading monomial of any polynomial when we homogenize it, hence it is the more convenient  when  we  compute  monomial bases for the coordinate ring of  an affine  scheme and that of its projective closure  through  initial ideals and   sous-escaliers.

\end{remark}

We now compute  the  monomial   bases  of $A_E$  that can be obtained  using every  term ordering,  and in particular those  quoted above.

\begin{lemma}\label{eqcampo}      
If  $\prec$ is  any term ordering in $\FQ[x,y]$,  the  initial ideal $\In_\prec(I_E)$   is either $J_1=\langle y^{q}, x^{q^2}\rangle$ or  $J_2=\langle x^{q+1}, xy^{q^2-q},y^{q^2} \rangle$.\\
 Then, a monomial basis for $A_E$ is ether one of the following two:
\begin{equation}\label{baseB1} 
 \B_{J_1}=\{ x^ry^s \mid r\leq q^2-1,  s\leq q-1\} 
\end{equation}
\begin{equation}\label{baseB}
  \B_{J_2}=\{x^ry^s \  \mid \ 0\leq r\leq q , \   s\leq q^2-q-1\} \cup \{y^s \  \mid   \    q^2-q\leq s\leq q^2-1\}.
\end{equation}
In particular, we obtain \eqref{baseB1} with $<_w$ and \eqref{baseB} with $\mathtt{DegRevLex}$.

\end{lemma}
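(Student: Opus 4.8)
The plan is to determine the initial ideal $\In_\prec(I_E)$ by a case analysis on how $\prec$ orders the three generators $H = x^{q+1}-y^q-y$, $x^{q^2}-x$, $y^{q^2}-y$ of $I_E$, and then to read off the corresponding sous-\'escalier. First I would observe that $H$ has only the two candidate leading monomials $x^{q+1}$ and $y^q$ (the term $y$ is always dominated by $y^q$), so any term ordering falls into one of two classes according to whether $x^{q+1}\succ y^q$ or $y^q\succ x^{q+1}$. In the first class, since $x^{q+1} \succ y^q$ forces $x$-heavy monomials to be large, one expects $\In_\prec(H) = \langle x^{q+1}\rangle$; in the second, $\In_\prec(H) = \langle y^q\rangle$. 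For $x^{q^2}-x$ the leading monomial is always $x^{q^2}$ and for $y^{q^2}-y$ always $y^{q^2}$.

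Next I would argue that in the class $y^q \succ x^{q+1}$ the three leading terms $y^q$, $x^{q^2}$, $y^{q^2}$ reduce to a Gr\"obner basis with initial ideal $J_1 = \langle y^q, x^{q^2}\rangle$: one checks that $y^{q^2}$ is already a multiple of $y^q$, and that the $S$-polynomial of $H$ and $x^{q^2}-x$ reduces to zero modulo these two (using that $x\cdot y = y^{q+1}+\dots$ can be rewritten via $H$). The sous-\'escalier of $J_1$ is exactly $\{x^ry^s : r\le q^2-1,\ s\le q-1\}$, which has $q^3 = n$ elements, matching $\dim_{\FQ} A_E = n$ — this count is the sanity check that confirms we have the full Gr\"obner basis and not just a sub-ideal. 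Symmetrically, in the class $x^{q+1}\succ y^q$, I would compute the Gr\"obner basis starting from $x^{q+1}$, $x^{q^2}$, $y^{q^2}$: here $x^{q^2}$ is a multiple of $x^{q+1}$, but new $S$-polynomials are generated. Reducing $H$ against $y^{q^2}-y$ and iterating produces the extra generator $xy^{q^2-q}$ (this is where the relation $x^{q+1} \equiv y^q+y$ gets repeatedly substituted into $y^{q^2}-y$), giving $\In_\prec(I_E) = J_2 = \langle x^{q+1}, xy^{q^2-q}, y^{q^2}\rangle$. Again I would verify that the sous-\'escalier $\B_{J_2}$ in \eqref{baseB} has cardinality $(q+1)(q^2-q) + q = q^3 = n$, confirming completeness.

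Finally I would identify which of $<_w$ and $\mathtt{DegRevLex}$ lands in which class. For $<_w$ with weight vector $[q,q+1]$ the monomial $y^q$ has weight $q(q+1)$ while $x^{q+1}$ has weight $(q+1)q$ — equal — so the $\mathtt{Lex}$ tie-break with $y>x$ makes $y^q \succ x^{q+1}$, placing $<_w$ in the $J_1$ case and yielding basis \eqref{baseB1}. For $\mathtt{DegRevLex}$ with $y>x$, the monomial $x^{q+1}$ has total degree $q+1$ while $y^q$ has total degree $q$, so $x^{q+1}\succ y^q$, placing $\mathtt{DegRevLex}$ in the $J_2$ case and yielding \eqref{baseB}.

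The main obstacle I anticipate is the explicit Buchberger-style computation in the $J_2$ case: one must show that the generating set $\{x^{q+1}-y^q-y,\ x^{q^2}-x,\ y^{q^2}-y\}$ completes to a Gr\"obner basis whose only new leading monomial is $xy^{q^2-q}$, and that no further leading monomials appear. Rather than grinding through all $S$-polynomial reductions, I would instead take the slicker route: exhibit the candidate initial ideal $J_2$ (resp. $J_1$), check directly that $J_2 \subseteq \In_\prec(I_E)$ by producing, for each generator of $J_2$, an explicit polynomial in $I_E$ with that leading monomial (for $xy^{q^2-q}$ this is $x(y^{q^2-q})$-type combination built from $H$ and $y^{q^2}-y$), and then invoke the dimension count $\dim_{\FQ}\FQ[x,y]/J_2 = n = \dim_{\FQ} A_E$ to force equality $J_2 = \In_\prec(I_E)$. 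This sidesteps termination-of-Buchberger bookkeeping entirely, reducing the proof to (i) two containment verifications and (ii) two monomial-counting arguments, plus (iii) the elementary tie-break analysis above.
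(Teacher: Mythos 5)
Your proposal follows essentially the same route as the paper: split on whether $\LM_\prec(H)$ is $y^q$ or $x^{q+1}$, exhibit explicit elements of $I_E$ realizing each generator of the candidate initial ideal, and close the argument by counting the sous-\'escalier against $\dim_{\FQ}A_E=q^3$. The one concrete slip is your parenthetical claim that the element with leading monomial $xy^{q^2-q}$ is ``built from $H$ and $y^{q^2}-y$'': it is not, and in fact $\In_\prec\langle H,\,y^{q^2}-y\rangle=\langle x^{q+1},y^{q^2}\rangle$ (its sous-\'escalier already has the right cardinality $(q+1)q^2$), so no combination of those two generators has that leading monomial. The correct source is the \emph{other} field equation: reduce $x^{q^2}-x$ modulo $H$ using $x^{q^2}=x\,(x^{q+1})^{q-1}$ and the fact that $(x^{q+1})^{q-1}-(y^q+y)^{q-1}$ is a multiple of $H$, obtaining $x(y^q+y)^{q-1}-x\in I_E$ with leading monomial $xy^{q^2-q}$. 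With that substitution your containment-plus-counting argument goes through exactly as in the paper.
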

\begin{proof}
Depending on the term ordering, the leading monomial of $H$ is ether $y^q$ or $x^{q+1}$; the leading monomials of the field equations are in every case $x^{q^2}$ and $y^{q^2}$.

If the leading monomial of $H$ is $y^q$ then $\In_\prec(I_E)$ contains $J_1$ and the set of monomials that do not belong to $\B_{J_1}$ generate $A_E$. Therefore $\In_\prec(I_E)=J_1$  since $I_E$ and $J_1$ define zero-dimensional schemes in $\mathbb{A}^2_{\Fq}$ of the same length $q^3$.
This is the case that happens when we use $<_w$.\\
Now we assume that the leading monomial of $H$ is $x^{q+1}$. 
We observe that if $a,b$ are elements of a ring, then $a^{q-1}-b^{q-1}$ is divisible by $a-b$. 

Then $K=(x^{q+1})^{q-1} - (y^q+y)^{q-1}$ belongs to $I_E$, since it is a multiple of $H$.  Hence, $xy^{q^2-q}=\LM_\prec((x^{q^2}-x)-xK)=\LM_\prec(x(y^{q}+y)^{q-1}-x)\in \LM_\prec(I_E)$.  Therefore $\In_\prec(I_E)$ contains $J_2$ and the same argument as above  show that they coincide. This is the case that happens when we use $\mathtt{DegRevLex}$.
\end{proof}

For now on, we simply denote by $\prec$ the term ordering $\mathtt{DegRevLex}$ in $\FQ[x,y]$ with $y>x$.
Figure \ref{fig.base} represents the basis  for $A_E$ given in \eqref{baseB},  that in the following we will denote by $\B$.
\begin{figure}[ht!]
 \centering
\includegraphics[height=5cm]{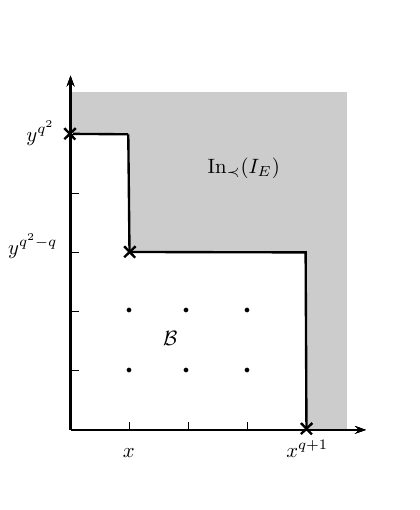}
  \caption{The set of monomials in $\mathcal B$}
  \label{fig.base}
\end{figure}

\begin{remark} \label{gaps} 
The monomial basis $\B$ for   the $\FQ$-vector space  $A_E$ allows us to represent every class in $A_E$ by the unique polynomial $F$ in this class such that  its  degree $\partial_x(F)$ with respect to  $x$ is at most $q$ and its  degree $\partial_y(F)$  with respect to  $y$ is at most  $q^2-1$. 
We observe that  $\prec$ and $<_w$ choose the same leading monomial for each  polynomial $F$ in  $ \FQ [x,y]$ such that $\partial_x(F) \leq q$.  

In fact, let us consider two monomials $x^{\alpha_1}y^{\beta_1}$ and $x^{\alpha_2}y^{\beta_2}$ in $\B$ such that $x^{\alpha_1}y^{\beta_1}\prec x^{\alpha_2}y^{\beta_2}$. This happens when either
$\alpha_1+\beta_1=\alpha_2+ \beta_2$ and $\alpha_1>\alpha_2$ or $\alpha_1+\beta_1<\alpha_2+ \beta_2$. 

 In the first case we have equality between positive integers $\alpha_1-\alpha_2 = \beta_2-\beta_1$ and so $(\alpha_1-\alpha_2)q < (\beta_2-\beta_1)(q+1)$, hence  $x^{\alpha_1}y^{\beta_1} <_w x^{\alpha_2}y^{\beta_2}$.\\
In the second case, we have $\alpha_1-\alpha_2 < \beta_2-\beta_1$.  If $\beta_2-\beta_1\geq 0$, then  $(\alpha_1-\alpha_2)q < (\beta_2-\beta_1)(q+1)$ hence $x^{\alpha_1}y^{\beta_1} <_w x^{\alpha_2}y^{\beta_2}$. It remains to consider the case $\beta_1=\beta_2+t$ and $\alpha_2=\alpha_1+t+k$, with suitable positive integers $t,k$. Then $(\alpha_1-\alpha_2)q - (\beta_2-\beta_1)(q+1) =-kq+t$ which is negative since $t< \alpha_2\leq q$.\\
  As a consequence we see that  the $w$-degrees of the monomials in $\B$  are pairwise different. Moreover, they cover the range of  integers  between $0$ and $q^3+(q^2-q)-1=n+2g-1$, with  $2g$  exceptions. In fact there are $g$ integers  between     $1$ and $2g-1$ that cannot be obtained as the $w$-degree of any monomial: they are the  gaps of the semigroup generated by  $q$ and $q+1$. Moreover, there are $g$ numbers between   $ n$ and $n+2g-1$ that are the $w$-degree of monomials multiple of $xy^{q^2-q}$ that do not belong to $\B$ since       $xy^{q^2-q}\in \In_\prec(I_E)$. 
 \end{remark}

Finally, we summarize  the explicit  construction of the  \textit{Hermitian code}.

\begin{definition}\label{def:codice}
Let us fix any  ordered list  $[P_1, \dots, P_n]$ of the  $n$ points of   $E$.  Let $\B$ be as \eqref{baseB},   $m$ be any integer  $  \leq n+2g-2$, and let  $V_m \subseteq A_E$ be the vector space  with basis 
$$
\B_m=\B \cap V_m=\{x^ry^s \in\B \mid rq+s(q+1)\leq m\}.
$$  

Then the  \textbf{Hermitian code} $C_m$  is the $\FQ$-vector space
$(\mathrm{Span}_{\FQ}\langle\phi_{I_E}(\B_m)\rangle)^\perp$ where $\phi_{I_E}$ is the evaluation map (\ref{eval}) at the points of $E$. 
\end{definition}

The Hermitian codes can be divided in four phases \cite{CGC-cd-book-AG_HB},
any of them having specific explicit formulas linking their dimension and
their distance \cite{CGC-cd-phdthesis-marcolla}, as in Table~\ref{Tab}. 

{\scriptsize{
\begin{table}[H]
\caption{The four \textquotedblleft phases\textquotedblright of Hermitian codes \cite{CGC-cd-phdthesis-marcolla}.}\label{Tab}
\begin{center}
\begin{tabular}[h]{|cccc|}
\hline\rowcolor{lightgray}\noalign{\smallskip}
\textsf{Phase}  & $\textsf{m}$ & \textsf{Distance} $\textsf{d}$ & \textsf{Dimension} $\textsf{k}$\\
\noalign{\smallskip}
\hline
\noalign{\smallskip}
\textbf{1} & {\scriptsize$\begin{array}{c}
  0\le m\le q^2-2\\
   m=aq+b\\
  0\le b\le a \le q-1\\
b\ne q-1
\end{array}$} & {\scriptsize$\begin{array}{ll}
  a+1 & a>b\\
  a+2 & a=b
\end{array}\iff d\le q$} & {\scriptsize$
  q^3-\frac{a(a+1)}{2}-(b+1)$}\\
  & & &\\
\textbf{2} &  {\scriptsize$\begin{array}{c}
  q^2-1\le m\le 4g-3\\
  m=(2g-2)+ aq-b\\
  2\le a\le q-1\\
  0\le b\le q-2\\
  \mbox{ if } a=q-1 \mbox{ then } b\geq 1
\end{array}$} & {\scriptsize$\begin{array}{ll}
  aq-b & \mbox{ if } aq-b\in\langle q,q+1\rangle\\
  aq & \mbox{ if } aq-b\not\in\langle q,q+1\rangle
\end{array}$} & {\scriptsize$
   n-m+g-1$}\\
  & & &\\
\textbf{3} & {\scriptsize$4g-2\le m \le n-2$} & {\scriptsize$m-2g+2$}& {\scriptsize$n-m+g-1$}\\
  & & &\\
\textbf{4} & {\scriptsize$\begin{array}{c}
  n-1\le m \le n+2g-2\\
  m=n+2g-2-aq-b\\
0\le b\le a\le q-2,
\end{array}$} & {\scriptsize$n-aq-b$}& {\scriptsize$\frac{a(a+1)}{2}+b+1$}\\
\hline
\end{tabular}
\end{center}
\end{table}}}

\section{Hermitian codes $C_m$ with $m \geq q^2 -1$.}\label{Sec.prere}

In the following,  for every Hermitian code  $C_m$ with $m\geq q^2 -1$, we will denote by $(\mu, \beta)$ the unique pair of non negative integers such that  $m=\mu q +\beta (q+1)$ and  $\mu \leq q$ (Remark \ref{gaps}).  As we are considering only the case $m\geq q^2 -1$, these integers do exist.
 
It is a straightforward consequence of Definition \ref{def:codice} that   $C_{m}\supseteq C_{m'}$ when  $m<m'$. If in $\B$ there is no monomial of $w$-degree   $m+1$,  then $\mathcal B_{m}=\mathcal B_{m+1}$ and there is only one code corresponding  to $m$ and to $m+1$.  
For this reason we adopt the following
\begin{convention}\label{conv.m} We will label a   code after an integer $m$ if and only if    $\B$ contains a monomial of $w$-degree  $m+1$.
\end{convention}
Therefore, by  Remark \ref{gaps},   we do not label a code after $m$   when either $m+1$ is a gap of the semigroup $\langle q,q+1\rangle$   or  the only monomial $x^ry^s$ with $r\leq q$ and  $w$-degree $m+1$ is a multiple of  $xy^{q^2-q}$.

 \begin{example}\label{ex32} For   $q=3$, the first integer  $m$  in  the rang of  the  fourth phase is    $26$.  However, we do not label any   code after this value of $m$.  In fact,   the only     monomial  $x^r y^s$ with $r\leq 3$ and  $w$-degree  $26+1$,  is  $xy^6$,   a minimal generator of $\In_\prec(I_E)=\langle x^4, xy^6, y^9\rangle$. Hence,  $\mathcal B_{26}=\mathcal B_{27}$. On the other hand, in $\mathcal B$ there is the monomial  $y^7$ whose $w$-degree is $27+1$, so that  $\mathcal B_{27}\neq \mathcal B_{28}$.  Therefore, the first code of the fourth phase will be denoted in our convention as $C_{27}$.
\end{example}

Note that  for every  monomial $x^r y^s$ of $\B$ we have  
$$ x^r y^s\in \B\setminus \B_m\Longleftrightarrow  \mbox{ either }    r+s >\mu + \beta  \hbox{ or } r+s=\mu + \beta   \hbox{ and }    r<\mu.$$

In the following, for any $\FQ$-divisor $D$ over the  Hermitian curve we will denote by $V_{m,D}$   the image  of $V_m$ in $A_D$.  
 We observe that
 $D$   contains  the support of some codeword of $C_m$ if $V_{m,D}$  has dimension less than $\vert  D\vert$.  
We   summarize all these facts in the following:

\begin{proposition}\label{fondamentale} 
Let $D$ be a $\FQ$-divisor over the Hermitian curve. Then the following are equivalent for the Hermitian code $C_m$ with $m\geq q^2-1$:
\begin{enumerate}
    \item[(i)] \label{fondamentale_i}  $\exists \ \mathbf c \in  C_m $ with $\mathbf c\ne 0$ such that $\supp(\mathbf c)\subseteq D$
\item[(ii)] \label{fondamentale_ii}   $V_{m,D}$   has dimension less than $\vert  D\vert$ as  $\FQ$-vector space.
\item[(iii)]  \label{fondamentale_iii} There exists a monomial $x^ay^b\in\cN(\In_\prec(I_D))$  whose $w$-degree $w(x^ay^b)=aq+b(q+1) $ satisfies $m+1\leq w(x^ay^b) \leq m+q+1$.
\item[(iv)] \label{fondamentale_iv}  Let $m=\mu q + \beta (q+1)$. If $\mu >0$, $\cN(\In_\prec (I_D))$ contains at least one monomial in 
\begin{equation}\label{scala}  \mathcal L_1:=\{ x^{\mu-i}y^{\beta+i} \hbox{ with } 1\leq i \leq \mu \} \cup \{ x^{q-j}y^{\beta+\mu -q+j+1} \hbox{ with } 0\leq j\leq q-\mu\};  \end{equation}
 otherwise,  $\mu=0$  and $\cN(\In_\prec (I_D))$ contains at least one monomial in
\begin{equation}\label{scala2}   
\mathcal L_2:=\{ x^{q-j}y^{\beta -q+j+1}, \hbox{ with } 0\leq j\leq q\}. 
\end{equation}
\end{enumerate}  
\end{proposition}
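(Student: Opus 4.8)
The plan is to prove the cycle of equivalences $(i)\Leftrightarrow(ii)\Leftrightarrow(iii)\Leftrightarrow(iv)$, keeping in mind that the code $C_m$ is the dual of $\mathrm{Span}_{\FQ}\langle\phi_{I_E}(\B_m)\rangle$. First I would establish $(i)\Leftrightarrow(ii)$. A nonzero codeword $\mathbf c\in C_m$ with $\supp(\mathbf c)\subseteq D$ is, by definition of the dual, a linear dependence relation among the columns of the parity-check matrix indexed by the points of $D$; equivalently, it is a nonzero linear functional on $\FQ^{|D|}$ vanishing on the image of $V_m$ under the evaluation map restricted to $D$. Since that restricted evaluation map is exactly the quotient $V_m\twoheadrightarrow V_{m,D}\subseteq A_D$ followed by the isomorphism $A_D\cong\FQ^{|D|}$ (using that $I_D$ is radical with simple $\FQ$-points, Remark \ref{rm:eqcampo}), such a functional exists iff $\dim_{\FQ}V_{m,D}<|D|=\dim_{\FQ}A_D$. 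This is a short linear-algebra argument once the identifications are set up carefully.

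Next I would do $(ii)\Leftrightarrow(iii)$. The key point is that $\dim_{\FQ}A_D=|D|$ always, and $A_D$ has as monomial basis the sous-\'escalier $\cN(\In_\prec(I_D))$. Now $V_{m,D}$ is spanned in $A_D$ by the images of the monomials of $w$-degree $\le m$. I want to show: $\dim V_{m,D}<|D|$ iff $\cN(\In_\prec(I_D))$ contains a monomial of $w$-degree strictly between $m+1$ and $m+q+1$. Since $I_D\supseteq I_E$, we have $\In_\prec(I_D)\supseteq\In_\prec(I_E)=J_2$, so $\cN(\In_\prec(I_D))\subseteq\B$; by Remark \ref{gaps} the $w$-degrees of monomials in $\B$ are pairwise distinct and, crucially, for any $m\ge q^2-1$ in the relevant range there is exactly one monomial of $\B$ with $w$-degree in each of $m+1,\dots,m+q+1$ among the non-gap values — more precisely, because $\B_m$ together with the monomials of $\B$ of $w$-degree $>m$ partitions $\B$, and because passing from $w$-degree $m$ to $w$-degree $m+q+1$ "crosses" exactly the $\le 1$ new standard monomial needed. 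I would argue that $V_{m,D}\subsetneq A_D$ forces some element of the basis $\cN(\In_\prec(I_D))$ of $A_D$ to have $w$-degree $>m$, and conversely the smallest such basis monomial $x^ay^b$ cannot have $w$-degree exceeding $m+q+1$: if it did, then every monomial of $\B$ of $w$-degree in $\{m+1,\dots,m+q+1\}$ would lie in $\In_\prec(I_D)$, but reducing any one of them modulo a Gröbner basis of $I_D$ produces a standard polynomial all of whose monomials have strictly smaller $w$-degree yet which is nonzero in $A_D$ — and one checks this standard polynomial would then lie in $V_{m,D}$ and be independent of $V_{m-1,D}$, contradicting $\dim V_{m,D}=\dim V_{m-1,D}$ being forced by the gap structure. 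This combinatorial bookkeeping about which $w$-degrees in the window $[m+1,m+q+1]$ are realized by monomials of $\B$ is the main obstacle; it uses the structure of $\B$ from Lemma \ref{eqcampo} and the gap analysis in Remark \ref{gaps} in an essential way.

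Finally, $(iii)\Leftrightarrow(iv)$ is a direct translation. Writing $m=\mu q+\beta(q+1)$ with $\mu\le q$, I would list explicitly the monomials $x^ay^b\in\B$ whose $w$-degree $aq+b(q+1)$ lies in $[m+1,m+q+1]$: incrementing the total degree $a+b$ by one from $\mu+\beta$ and adjusting within a fixed total degree changes $w$-degree by steps of $1$ (replacing $x$ by $y$) or $q$ (adding a factor of $x$), so the monomials landing in that window are precisely those in the set $\mathcal L_1$ when $\mu>0$, respectively $\mathcal L_2$ when $\mu=0$ — here the constraint $\partial_x\le q$ coming from $\B$ is exactly what truncates the list and produces the two pieces of $\mathcal L_1$. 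I would verify by a direct computation of $w$-degrees that each monomial in $\mathcal L_1\cup\{x^{q-j}y^{\beta+\mu-q+j+1}\}$ has $w$-degree in the required range and that these exhaust the window, and similarly for $\mathcal L_2$; then $(iii)$ says $\cN(\In_\prec(I_D))$ meets this window, which is exactly $(iv)$. The only subtlety is checking the boundary cases of the index ranges (e.g. $i=\mu$, $j=q-\mu$) and that no monomial is double-counted, which is routine arithmetic with $q$ and $q+1$.
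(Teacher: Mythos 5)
Your overall architecture follows the paper's: the equivalence (i)$\Leftrightarrow$(ii) is the same parity-check/evaluation-map linear algebra, the reduction of (ii) to a statement about $\cN(\In_\prec(I_D))\subseteq \B$ is the right move, and your explicit enumeration for (iii)$\Leftrightarrow$(iv) checks out (the two pieces of $\mathcal L_1$ realize the $w$-degrees $m+1,\dots,m+\mu$ and $m+\mu+1,\dots,m+q+1$ respectively, and membership in $\B$ takes care of itself because $\cN(\In_\prec(I_D))\subseteq\B$). The genuine gap is in the upper bound $w(x^ay^b)\le m+q+1$ of (iii). Your argument for it --- reduce a monomial of $w$-degree in the window modulo a Gr\"obner basis of $I_D$ and derive a contradiction from dimensions --- does not work. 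First, the normal form of a monomial of $\In_\prec(I_D)$ need not be ``nonzero in $A_D$'' (the monomial may simply lie in $I_D$). More importantly, the normal form of such a monomial is a combination of standard monomials of strictly smaller $w$-degree, hence (under your contradiction hypothesis) of $w$-degree $\le m$, so it lies in $V_{m,D}$; nothing forces it to be ``independent of $V_{m-1,D}$'', and the identity $\dim V_{m,D}=\dim V_{m-1,D}$ is not ``forced by the gap structure''. Indeed, the hypothesis you are trying to refute --- every standard monomial has $w$-degree either $\le m$ or $>m+q+1$ --- is perfectly consistent with every dimension identity you invoke; no amount of counting will contradict it.

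What actually rules it out, and what the paper uses, is the order-ideal property of the sous-\'escalier, which your sketch never invokes: if $x^ry^s\in\cN(\In_\prec(I_D))$ then so are its divisors $x^{r-1}y^s$ and $x^ry^{s-1}$, whose $w$-degrees are $w(x^ry^s)-q$ and $w(x^ry^s)-(q+1)$. Take $x^ry^s$ of minimal $w$-degree among the standard monomials of $w$-degree $>m$ (these exist by the first part of your step (ii)$\Rightarrow$(iii), which is fine). If $w(x^ry^s)>m+q+1$, then both divisors are standard and still have $w$-degree $>m$, contradicting minimality; hence $w(x^ry^s)\le m+q+1$. Replacing your reduction argument by this observation closes the gap; the rest of your plan is sound.
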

\begin{proof} 
Let $\rm H_E$ be  the parity-check matrix   of the repetition code (the only one with distance $q^3$,  whose codewords are the $q^2$ vectors with all the entries pairwise equal)  and   $\rm H_{E,m}$ be  the  parity-check matrix   of the code  $C_m$. Let us  consider their sub-matrices $M_D$ and $M_{D,m}$ obtained only considering the columns corresponding to the points $P_i \in D$.  More explicitly, if $\delta=\vert D\vert$ and $D=\{P_{i_1}, \dots, P_{i_\delta}\}$   the  $j$-th  column of $\rm H_E$  (respectively of $\rm H_{E,m}$)  is  formed by the  evaluation of the monomials of $\B$ (respectively of   $\B_m$)  at $P_{i_j}$. \\
The   codewords of  $C_m$ are the solutions of the homogeneous   linear system $\rm H_{E,m}\cdot  Z^T=0$, where $Z=(z_1, \dots, z_n)$. The list of non-zero components of any  codeword  $\bc\in C_m$ with  $\supp(\bc)\subseteq D$ is    a solution of  the linear system $M_{D,m}\cdot Z_D^T=0$, where $Z_D=(z_{i_1}, \dots, z_{i_\delta})$.  Therefore,  there exist non-zero codewords $\bc\in C_m$ such that $\supp(\bc)\subseteq D$ if and only if $\rk(M_{D,m})<\vert D\vert$, that is, \eqref{fondamentale_ii} is verified.\\
\noindent By Lemma \ref{eqcampo}   the   basis of $A_D$ given by the monomials of  $\cN(\In_\prec(I_D))$  is a subset of $\B$. Since $\rk(M_{D})=\vert D\vert$, then  \eqref{fondamentale_i} is verified if and only if there exists a monomial   $x^ry^s \in \B\setminus \B_m$  that belongs to  $\cN(\In_\prec(I_D))$.
We may assume that $x^ry^s$ is the monomial in $\cN(\In_\prec(I_D)) \cap (\B\setminus\B_m)$ having minimum $w$-degree.
 
This $w$-degree is less than or equal to $m+q+1$. In fact, if this were not true we could find in $\cN(\In_\prec(I_D))$ either  $x^{r-1}y^s$ or $x^ry^{s-1}$ with $w$-degree still larger than $m$ against the minimality of  $x^ry^s$. \\
The equivalence between \eqref{fondamentale_iii}  and \eqref{fondamentale_iv} is obvious, being \eqref{fondamentale_iv} a more explicit rewriting  of \eqref{fondamentale_iii}.
\end{proof}

Observe that the equivalence between the conditions   \eqref{fondamentale_i},\eqref{fondamentale_ii},\eqref{fondamentale_iii} holds true 
for every Hermitian code $C_m$ and also \eqref{fondamentale_iv} is equivalent to the previous ones provided the integer $m$ can be written as $\mu q+\beta(q+1)$.

\subsection{Complete intersections on $\He$}

In this section we use the B\'{e}zout Theorem to find some properties of  the zero-dimensional schemes that are complete intersection of  $\He$ with another curve $\xx$. 
To this purpose, we  must also  consider  the possible intersections at infinity.
We recall that the projective closure $\overline \He$ of $\He$ has a single  point at infinity  $P_\infty=[x_0=0:x_1=0:x_2]$ where $x_1/x_0=x$ and $ x_2/x_0=y$. It is a smooth, inflexion  point with  tangent line $x_0=0$.

\begin{proposition}\label{gradox} 
Let $F\in\FQ[x,y]$ be a polynomial such that $\partial_x (F)\leq q$ and let $\xx$ be the curve given by the ideal $\langle F\rangle $. If  $\LM_\prec (F)=x^r y^s$, then 
\begin{enumerate}
\item[(i)] \label{gradox_i} $\In_\prec(\langle H,F\rangle)=\langle x^{q+1}, x^ry^s, y^{s+q} \rangle$ when    $s>  0$ and 
\item[(ii)] \label{gradox_ii}  $\In_\prec(\langle H,F\rangle)=\langle x^{r},  y^{q} \rangle$ when $s=0$.
\end{enumerate}
Moreover,   the degree of the divisor $D$ cut on  $\He$ by  $\xx $ is $rq+s  (q+1)$. 
\end{proposition}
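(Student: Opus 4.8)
The plan is to analyze the ideal $\langle H, F\rangle$ in $\FQ[x,y]$ via its initial ideal with respect to $\prec=\mathtt{DegRevLex}$, and then read off the degree of the complete intersection divisor $D=\He\cap\xx$ (including the contribution at $P_\infty$) by comparing with the classical B\'ezout count on the projective closures. First I would treat the generic case $s>0$. Since $\partial_x(F)\le q$ and $\LM_\prec(F)=x^ry^s$, necessarily $r\le q$. The initial ideal $\In_\prec(\langle H,F\rangle)$ certainly contains $x^{q+1}=\LM_\prec(H)$ and $x^ry^s=\LM_\prec(F)$. To get the third generator $y^{s+q}$, I would mimic the trick already used in the proof of Lemma~\ref{eqcampo}: write $H = x^{q+1}-(y^q+y)$, so modulo $H$ one can replace $x^{q+1}$ by $y^q+y$; multiplying $F$ by a suitable power of $x$ and reducing powers $x^{q+1}$ via $H$ produces a polynomial whose leading monomial is $y^{s}\cdot y^{q}=y^{s+q}$ — concretely, from $F$ one forms $x^{q+1-r}F$, whose leading monomial is $x^{q+1}y^s$, and reduces $x^{q+1}y^s$ against $H$ to get leading monomial $y^{q+s}$ (one must check the lower-order terms of $F$ cannot interfere, which holds because multiplying by $x^{q+1-r}$ and reducing raises $y$-degree controllably and $\prec$ is graded). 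Hence $J:=\langle x^{q+1}, x^ry^s, y^{s+q}\rangle \subseteq \In_\prec(\langle H,F\rangle)$.

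To show equality, I would compare colengths. The scheme $\He\cap\xx$ is zero-dimensional (a curve meets $\He$ in finitely many points unless $F$ is divisible by $H$, which is excluded since $\partial_x F\le q < q+1$), so $\FQ[x,y]/\langle H,F\rangle$ is finite-dimensional of dimension equal to $\deg(\He\cap\xx)$ counted over $K$; and $\In_\prec$ preserves this dimension, which equals the number of standard monomials of $\In_\prec(\langle H,F\rangle)$. On the other hand the monomials outside $J$ are exactly $\{x^iy^j : i\le q,\ j\le s-1\}\cup\{x^iy^j: i\le r-1,\ j\le s+q-1\}$, and a direct count gives $qs + r\cdot q + \ldots$; the clean way is to observe $\dim_K \FQ[x,y]/J = (q+1)(s+q) - (q+1-r)\cdot s$ after an inclusion–exclusion on the three generators, which simplifies to $rq+s(q+1)+$ (the genus-type correction). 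So it suffices to prove $\dim_K \FQ[x,y]/\langle H,F\rangle = \dim_K \FQ[x,y]/J$; for this I would invoke B\'ezout on $\mathbb P^2$: $\overline\He$ has degree $q+1$ and $\overline\xx$ has degree $d_F=r+s$ (the total degree of $F$, which equals $r+s$ since $\LM_\prec$ under a graded order realizes the top degree), so the total intersection number is $(q+1)(r+s)$. The intersection away from $P_\infty$ has degree $\dim_K\FQ[x,y]/\langle H,F\rangle$, and the local intersection multiplicity at $P_\infty$ — where $\overline\He$ is smooth with inflectional tangent $x_0=0$ — I can compute directly in the local ring at $P_\infty$ using the parametrization, getting $(q+1)(r+s)-\bigl(rq+s(q+1)\bigr)$, which is nonnegative; matching this against the colength of $J$ at infinity closes the argument.

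For case (ii), $s=0$: then $\LM_\prec(F)=x^r$ with $r\le q$, and $\langle H,F\rangle$ contains $x^r$; reducing $H=x^{q+1}-y^q-y$ modulo $x^r$ (i.e. subtracting $x^{q+1-r}$ times the leading part does not apply directly since $r\le q$, but $x^r\mid$ the monomial $x^{q+1}$ only if $r\le q+1$, which holds) shows $\In_\prec$ contains $\LM_\prec(H - (\text{multiple of }F)) = y^q$ once the $x^{q+1}$ term is killed. So $\In_\prec(\langle H,F\rangle)\supseteq\langle x^r,y^q\rangle$, and again a colength/B\'ezout comparison — here $\overline\xx$ has degree $r$, the total count is $(q+1)r$, the affine part has colength $\dim_K\FQ[x,y]/\langle x^r,y^q\rangle = rq$, and the multiplicity at $P_\infty$ is $r$ (the line-like behavior in the $x$-direction) — forces equality and gives $\deg D = rq = rq + 0\cdot(q+1)$, as claimed.

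The main obstacle I anticipate is the rigorous computation of the intersection multiplicity of $\overline\He$ and $\overline\xx$ at $P_\infty=[0:0:1]$: one has to dehomogenize in the chart $x_2\ne 0$, use that $\overline\He$ is smooth there with a hyperflex (contact of order $q+1$ with its tangent), and carefully track how the leading monomial $x^ry^s$ of $F$ controls the order of vanishing of $\overline F$ along $\overline\He$ at that point. Equivalently, and perhaps more cleanly, I would avoid the explicit local analysis and instead argue purely algebraically: prove directly that $J$ equals $\In_\prec(\langle H,F\rangle)$ by exhibiting a Gr\"obner basis of $\langle H,F\rangle$ — namely $\{H, F, G\}$ where $G$ is the reduced S-polynomial combination producing leading term $y^{s+q}$ (resp.\ $y^q$) — and checking Buchberger's criterion, i.e.\ that all further S-polynomials reduce to zero. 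This reduces everything to a finite symbolic check governed by the shape of $\LM_\prec(F)$ and the fixed polynomial $H$, and then the degree statement follows immediately by counting standard monomials of $J$, giving exactly $rq+s(q+1)$.
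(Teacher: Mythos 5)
Your proposal follows essentially the same route as the paper's proof: produce the extra Gr\"obner basis element with leading monomial $y^{s+q}$ (resp.\ $y^{q}$) from the combination $x^{q+1-r}F-(\cdots)H$, and then force $\In_\prec(\langle H,F\rangle)=J$ by matching the number of standard monomials of $J$ against the B\'ezout count $(q+1)(r+s)$ minus the intersection multiplicity $r$ at $P_\infty$, the latter computed exactly as you sketch (dehomogenize at $x_2\neq 0$ and use that every top-degree monomial of $F$ is divisible by $x^r$, so that after eliminating $z$ via $H'$ the unique minimal-degree monomial is $x^r$). One slip to fix: the inclusion--exclusion count should read $\vert\cN(J)\vert=(q+1)(s+q)-(q+1-r)\,q$, which equals $rq+s(q+1)$ \emph{exactly} --- there is no ``genus-type correction,'' and this exact equality is precisely what closes the colength comparison.
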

\begin{proof}
We first prove that $|D|=rq+s  (q+1)$.  Since the term ordering $\prec$ is degree-compatible, the degree  of $F$ is equal to    the degree $r+s$ of its leading monomial. By  B\'{e}zout Theorem, the degree of the divisor  $\overline D$ cut on $\overline \He$ by the  projective closure $\overline \xx$ of $\xx$   is  $(r+s)(q+1)$.  It remains to prove that    the intersection multiplicity   of  the two curves at $P_\infty$ is $r$. For the generalities about the intersection multiplicity of two plane curves we refer to \cite{fulton:curve}.

 To this aim, we study the intersection of the two curves looking at any open subset of $\mathbb P^2$ around $P_\infty$; for instance we  choose the affine chart  given by $x_2\neq 0$. 

 In order to obtain the equations defining the two curves in this affine chart   we homogenize   $F$ and $H$, by setting $\overline F:=F(x_1/x_0,  x_2/x_0)\cdot x_0^{r+s}$ and $\overline H:=H(x_1/x_0,  x_2/x_0)\cdot x_0^{q+1}=x_1^{q+1}-x_2^q x_0-x_2 x_0^q$. Then  we  de-homogenize by setting $x_2=1, x_0=z, x_1=x$ and get the wanted equations $F'$ and $H'=x^{q+1}-z^q-z$.  For what concerns $F'$,  we observe that all monomials  of maximum degree in the  support of $F$ are divisible by $x^r$; hence  every  monomial of   $F'$ is divisible by $z$ and/or by $x^r$. Furthermore, the monomial    $x^r$ appears in the support of  $F'$, since $x^r y^s$ is in the support of $F$ and we have performed the following transformations
 $$x^r y^s \mapsto \frac{x_1^r}{x_0^r}\cdot  \frac{x_2^s}{x_0^s}\cdot x_0^{r+s} =x_1^r x_2^s \mapsto x^r. $$

Without modifying the intersection multiplicity at $P_\infty$,    we can replace any occurrence  of $z$ in $F'$ by $H'+z=x^{q+1}-z^q$, and repeat this substitution until we get  a polynomial $F''$ having $x^r$ as the only monomial of  minimum degree.
Therefore, $mult_{P_\infty}(\overline \He, \overline \xx)=mult_{P_\infty}(H',F')=mult_{P_\infty}(H',F'')=r$.

This allows us to conclude that   $|D|=\vert \overline D \vert - \vert rP_\infty \vert=  rq+s  (q+1)$.  

\medskip

Now we prove  \eqref{gradox_i} and  \eqref{gradox_ii}.  As the set of monomials $\cN(\In_\prec (I_D) )$ is a basis for $A_D$, for what just proved we know that its cardinality is $rq+s(q+1)$.
\begin{itemize}
    \item[(i)] If $s>0$, we can write  $F$ as $x^r y^s + x^{r+1}F_1+F_2$ where $\partial F_1=s -1$ and $\partial F_2< r+s $. We observe that the polynomial  $x^{q+1-r}F-(y^s +xF_1) H$ is  an element of $I_D$ and its  leading monomial is $y^{s+q}$. Therefore,  $\In_\prec (I_D) \supseteq J:=\langle x^{q+1}, x^r y^{s}, y^{s+q}\rangle$, so that  $\cN(\In_\prec (I_D) )\subseteq \cN (J)$. It is now easy to check that  the cardinality of $\cN (J)$ is exactly  $rq+s(q+1)$ and get the equality  $\In_\prec (I_D)= J$. 
    \item[(ii)] If $s=0$  we can write  $F$ as $x^r+F_3$ where $\partial F_3<r$. Again, we see that the  polynomial  $H-x^{q+1-r}F$ belongs to  $I_D$ and its leading monomial   is $y^q$. Hence  $\In_\prec (I_D) \supseteq J':=\langle x^{r},   y^{q}\rangle$, so that   $\cN(\In_\prec (I_D) )\subseteq \cN (J')$. An easy computation shows that   $\vert \cN (J')\vert = rq$ and we conclude  that $\In_\prec (I_D)=J'$.
\end{itemize}
\end{proof}

\begin{corollary}\label{intersezioniPinf} 
Let $D$ be a divisor over $\mathcal H$ and let $x^r y^s$  be a monomial in $\In_\prec (I_D)$ with  $r\leq q$. Then $\vert D\vert \leq  r q + s (q+1)$.
\end{corollary}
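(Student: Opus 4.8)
The plan is to turn the combinatorial hypothesis $x^r y^s\in\In_\prec(I_D)$ into an honest polynomial to which Proposition \ref{gradox} can be applied, and then compare the divisor this polynomial cuts on $\He$ with $D$ itself.

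First I would pick any $F\in I_D$ with $\LM_\prec(F)=x^ry^s$; such an $F$ exists by definition of the initial ideal. The only thing preventing an immediate application of Proposition \ref{gradox} is that $F$ need not satisfy $\partial_x(F)\le q$. To remedy this I would reduce $F$ modulo $H$: since $D$ is a divisor over $\He$ we have $H\in I_\He\subseteq I_D$, and $\LM_\prec(H)=x^{q+1}$, so the division algorithm produces $F'\in I_D$ with $\partial_x(F')\le q$. The key point — and the place where degree-compatibility of $\prec=\mathtt{DegRevLex}$ is essential — is that each elementary reduction by $H=x^{q+1}-y^q-y$ replaces a monomial of total degree $t$ only by monomials of total degrees $t-1$ and $t-q$, hence of strictly smaller degree; since $x^ry^s$ is the $\prec$-maximal monomial of $F$ and, because $r\le q$, is never divisible by $x^{q+1}$, it is untouched and no monomial of degree $\ge r+s$ is ever created. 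Therefore $\LM_\prec(F')=x^ry^s$ and $F'\ne 0$.

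Next I would apply Proposition \ref{gradox} to $F'$. Note first that $H\nmid F'$: otherwise $x^{q+1}=\LM_\prec(H)$ would divide $\LM_\prec(F')=x^ry^s$, contradicting $r\le q$; hence $\He$ and $\xx:=\mathcal{V}(\langle F'\rangle)$ meet in a zero-dimensional scheme and Proposition \ref{gradox} gives that the divisor $D'$ cut on $\He$ by $\xx$ is effective of degree $rq+s(q+1)$.

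Finally, since $F'\in I_D$, the curve $\xx$ passes through every point of $D$; as $D$ is reduced (a sum of distinct $\FQ$-points of $\He$), each of these points occurs with multiplicity at least $1$ in $D'=\He\cap\xx$, so $D\le D'$ as effective divisors. Taking degrees yields $\vert D\vert\le\vert D'\vert=rq+s(q+1)$, which is the claim. I do not anticipate a real obstacle here: the only delicate step is verifying that reduction modulo $H$ leaves the leading monomial $x^ry^s$ in place, and this is exactly what degree-compatibility of $\mathtt{DegRevLex}$ together with $r\le q$ guarantees; the rest is a direct invocation of Proposition \ref{gradox} and of the reducedness of $D$.
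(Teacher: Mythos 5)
Your proof is correct and follows essentially the same route as the paper: take a polynomial in $I_D$ with leading monomial $x^ry^s$, apply Proposition \ref{gradox} to see that the affine divisor it cuts on $\He$ has degree $rq+s(q+1)$, and conclude from $D\subseteq \He\cap\xx$. Your extra step of first reducing $F$ modulo $H$ (and checking that degree-compatibility of $\mathtt{DegRevLex}$ together with $r\le q$ preserves the leading monomial) is a careful touch that makes the hypothesis $\partial_x(F)\le q$ of Proposition \ref{gradox} literally satisfied, something the paper's own proof leaves implicit.
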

\begin{proof}
Let $F$ be any polynomial in $I_D$ such that $\LM_\prec (F)=x^r y^s$.  Then  the degree of  $F$ is  $r+s$ and  the projective closure of the curve defined by   $F$ cuts on $\overline\He$  a divisor $D+D'+t P_\infty$ of degree $ (r+s)(q+1)$.  By Proposition~\ref{gradox} we know that  $t=r$ and so $\vert D\vert \leq \vert D+D'\vert=(r+s)(q+1) -r= r q + s (q+1)$. 
\end{proof}

\begin{lemma}  \label{unodeidue} 
Let  $D$ be a $\FQ$-divisor over $\He$ which is the support of a non-zero  codeword of $C_m$. Then $I_D$ verifies at least one of the following conditions:
\begin{enumerate}
\item[(i)] \label{unodeidue_i}   $x^{q+1}\in \In_\prec (I_D)$  and $x^{q} \notin \In_\prec (I_D)$
\item[(ii)] \label{unodeidue_ii}  $y^q \in \In_\prec (I_D)$.
\end{enumerate} 
If moreover  $m\geq  2q^2-2q-2$, then either  $x^{q} \notin \In_\prec (I_D)$, or $y^{q-1} \notin \In_\prec (I_D)$.
\end{lemma}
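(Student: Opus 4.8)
The plan is to handle the two assertions separately, using throughout that $D\subseteq E$ forces $I_D\supseteq I_E$, hence $\In_\prec(I_D)\supseteq\In_\prec(I_E)=J_2=\langle x^{q+1},xy^{q^2-q},y^{q^2}\rangle$ by Lemma~\ref{eqcampo}; in particular $x^{q+1}\in\In_\prec(I_D)$ is automatic, so condition~(i) is just \lq\lq$x^q\notin\In_\prec(I_D)$\rq\rq\ and the first claim is equivalent to the implication $x^q\in\In_\prec(I_D)\Rightarrow y^q\in\In_\prec(I_D)$. To prove this implication I would pick $F\in I_D$ with $\LM_\prec(F)=x^q$. Since $\prec$ is degree--compatible and $x^q$ is the $\prec$--smallest monomial of degree $q$ in two variables (with $y>x$), no monomial of degree $q$ other than $x^q$ can occur in $F$; thus $F=cx^q+G$ with $c\neq0$ and $\deg G\le q-1$, so $\partial_x(F)=q$ and Proposition~\ref{gradox}\eqref{gradox_ii} (with $r=q$, $s=0$) yields $\In_\prec(\langle H,F\rangle)=\langle x^q,y^q\rangle$. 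As $H,F\in I_D$, this gives $y^q\in\langle x^q,y^q\rangle\subseteq\In_\prec(I_D)$, finishing the first part. (Note that this half does not use that $D$ supports a codeword.)

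For the \lq\lq moreover\rq\rq\ statement I would argue by contradiction: assume $m\ge 2q^2-2q-2$ and that $x^q$ and $y^{q-1}$ both lie in $\In_\prec(I_D)$. By the step above we also have $x^q,y^q\in\In_\prec(I_D)$, so $\langle x^q,y^{q-1}\rangle\subseteq\In_\prec(I_D)$ and hence $\cN(\In_\prec(I_D))\subseteq\{x^ay^b\mid 0\le a\le q-1,\ 0\le b\le q-2\}$, a set of $q(q-1)=q^2-q$ monomials; therefore $|D|=|\cN(\In_\prec(I_D))|\le q^2-q$. On the other hand $D$ is the support of a non-zero codeword of $C_m$, so $|D|\ge d$, and since $m\ge 2q^2-2q-2=4g-2$ places $C_m$ in phase~3 or~4 (Table~\ref{Tab}) we have $d=m-2g+2=m-q^2+q+2\ge q^2-q$. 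So $|D|=q^2-q$ and the inclusion above must be an equality: $\cN(\In_\prec(I_D))=\{x^ay^b\mid a\le q-1,\ b\le q-2\}$. But Proposition~\ref{fondamentale}\eqref{fondamentale_iii} then forces this sous--\'escalier to contain a monomial of $w$--degree at least $m+1\ge 2q^2-2q-1$, whereas every $x^ay^b$ with $a\le q-1$ and $b\le q-2$ has $w$--degree $aq+b(q+1)\le(q-1)q+(q-2)(q+1)=2q^2-2q-2$; this contradiction shows $x^q$ and $y^{q-1}$ cannot both be in $\In_\prec(I_D)$.

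The cardinality and $w$--degree bookkeeping is routine; the point that needs care, and on which the whole argument rests, is the inference \lq\lq$\LM_\prec(F)=x^q$ $\Rightarrow$ $F$ has $x$--degree $q$ with only a strictly--lower--degree tail\rq\rq, which is exactly the feature of $\texttt{DegRevLex}$ packaged into Proposition~\ref{gradox} (and which would fail for, say, $\mathtt{Lex}$, cf.\ Example~\ref{essemplice}). More conceptually, the main obstacle is to see that the B\'ezout--type upper bound $|D|\le q^2-q$ coming from Proposition~\ref{gradox}/Corollary~\ref{intersezioniPinf} collides with the distance lower bound $|D|\ge d\ge q^2-q$: the resulting rigidity pins $D$ down to a single complete intersection whose defining staircase carries no monomial of large enough $w$--degree to satisfy the codeword criterion of Proposition~\ref{fondamentale}.
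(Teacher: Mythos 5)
Your argument is correct and follows essentially the same route as the paper: the first assertion is obtained by reducing $H$ against a polynomial with leading monomial $x^q$ to produce $y^q$ in the initial ideal (you route this through Proposition~\ref{gradox}(ii), the paper does the subtraction $H-x^{q+1-r}F$ inline), and the ``moreover'' part is the same staircase count showing every monomial under $\langle x^q,y^{q-1}\rangle$ has $w$-degree at most $2q^2-2q-2\leq m$, contradicting Proposition~\ref{fondamentale}(iii). The only remark worth making is that your detour through the distance bound $|D|\geq d\geq q^2-q$ is superfluous (the inclusion $\cN(\In_\prec(I_D))\subseteq\cN(\langle x^q,y^{q-1}\rangle)$ already yields the contradiction) and is best omitted, since the paper derives that distance formula from results that depend on this very lemma.
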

\begin{proof}
Since  $I_D$ is a zero-dimensional ideal, then $ \In_\prec (I_D)$ contains some power of $x$ and of $y$: let  $x^r$ and $y^s$ be the minimal ones.
Obviously,  $r\leq q+1$, as $H\in I_D$. We  assume $r\leq q$, and prove that  $s\leq q$.

Indeed, if $F$ is any polynomial in $ I_D$   with leading monomial $x^r$, then we find in $I_D$ also the polynomial  $H-x^{q+1-r}F$ whose leading monomial is $y^q$. \\
It remains to prove that when $m\geq  2q^2-2q-2$ we cannot have both $r \leq q$ and $s \leq q-1$. In fact, if so,  $\In_\prec (I_D) \supseteq \langle x^q, y^{q-1}\rangle$, that is  $\cN(\In_\prec (I_D) )\subseteq \cN(\langle x^q, y^{q-1}\rangle)$.  The larger   $w$-degree of monomials in  $\cN((x^q, y^{q-1})) $ is $w(x^{q-1}y^{q-2})=2q^2-2q-2\leq m$,   in contradiction with  Proposition~\ref{fondamentale}. 
\end{proof} 

\section{Minimum distance of Hermitian codes of  third and fourth phase}\label{Sec.dist}

In this section we study the Hermitian codes $C_m$ with $ m\geq 2q^2-2q-2$ and in particular, at the end of the section, we  get a formula for their distance. What we obtain is nothing else than  the well known formula first proved by Stichtenoth in \cite{stichtenoth1988note}. However, we prefer to prove it directly since the preliminary  results that will lead us to this proof, especially  Theorem \ref{diseguaglianza},   are key tools in the main results of this paper.

\medskip

\begin{remark} \label{betamaggiore}
 We observe that in the new hypothesis, the numbers $\mu$ and $\beta$ such that $m=\mu q + \beta(q+1)$  always satisfy the  inequalities   $0\leq \mu \leq q$ and  $\beta \geq q-2$. More generally if $a,b$ are non-negative integers such that $ a\leq q$ and  $aq+ b (q+1)\geq  2q^2-2q-2$, then $b \geq q-2$. 
\end{remark}

  The argument we use to prove the last  part of the following  theorem is directly inspired by  the one used  by Stichtenoth in \cite{stichtenoth1988note} to explicitly exhibit  some codewords  of minimum weight and exploit the norm-trace form of the equation of $\He$.

\begin{theorem} \label{diseguaglianza} 
Let $x^a y^b\in \B\setminus \B_m$ such that   $m+1\leq w(x^a y^b)\leq m+q+1$.  
\begin{enumerate}
\item[(i)] If  $D$ is a $\FQ$-divisor over $\mathcal H$  such that  $x^a y^b\in \cN(\In_\prec (I_D))$ and which  is the support of a codeword of $C_m$, then
 $\vert D \vert \geq w(x^a y^b)- (q^2-q)+1$.
\item[(ii)]  There exists a $\FQ$-divisor $D'$ over $\He$ corresponding to a codeword of $C_m$ such that  $x^a y^b\in \cN(\In_\prec (I_{D'}))$ and $\vert D' \vert = w(x^a y^b)- (q^2-q)+1$. 
\end{enumerate}
\end{theorem}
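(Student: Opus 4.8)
The plan is to prove the two parts separately: part (i) is a Riemann--Roch argument phrased in the Gröbner language of the paper, and part (ii) is an explicit construction using products of lines.

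For part (i), I would first set up the dictionary between the combinatorics of $\prec$ and the linear series $L(kP_\infty)$ on $\overline\He$. For every $k$ the space $V_k$ of Section~\ref{Sec.pre} is, after evaluation, the image of $L(kP_\infty)$ in $A_E$ (the functions regular away from $P_\infty$ with a pole of order at most $k$ there are exactly the polynomials of $w$-degree $\le k$), so for any $\FQ$-divisor $D$ the evaluation $L(kP_\infty)\to A_D$ has kernel $L(kP_\infty-D)$ and hence $\dim V_{k,D}=\ell(kP_\infty)-\ell(kP_\infty-D)$. By Remark~\ref{gaps}, on $\B$ the order $\prec$ coincides with the $w$-degree order and $x^ay^b$ is the unique monomial of $\B$ of $w$-degree $N:=w(x^ay^b)$; thus $V_N=V_{N-1}\oplus\FQ\,x^ay^b$, and the hypothesis $x^ay^b\in\cN(\In_\prec(I_D))$ says exactly that $[x^ay^b]$ is not in the image of $V_{N-1}$ in $A_D$, i.e. $\dim V_{N,D}=\dim V_{N-1,D}+1$. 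Since $x^ay^b$ is a rational function on $\overline\He$ with a single pole, of order $N$, at $P_\infty$, $N$ is a non-gap at $P_\infty$ and $\ell(NP_\infty)=\ell((N-1)P_\infty)+1$; combining with the previous identity yields $\ell(NP_\infty-D)=\ell((N-1)P_\infty-D)$. Finally, if $|D|\le N-2g$ then $(N-1)P_\infty-D$ has degree $\ge 2g-1$, so both it and $NP_\infty-D$ are non-special and Riemann--Roch gives $\ell(NP_\infty-D)=\ell((N-1)P_\infty-D)+1$, a contradiction; hence $|D|\ge N-2g+1$. (This uses only $x^ay^b\in\cN(\In_\prec(I_D))$, not that $D$ is the support of a codeword.)

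For part (ii), a short computation with $2g=q^2-q$ gives $N-2g+1=(a+1)q+(b-q+1)(q+1)$, and from $x^ay^b\in\B$ together with $w(x^ay^b)\ge m+1\ge 2q^2-2q-1$ one checks that $b\ge q-1$ whenever $a\le q-1$. In that range I would take $F=\prod_{k=1}^{a+1}(x-\alpha_k)\cdot\prod_{l=1}^{b-q+1}(y-\gamma_l)$, choosing the $\alpha_k$ distinct with a common norm $c=\alpha_k^{q+1}$ (take $\alpha_1=0$ and $c=0$ if $a=0$, and $c\in\Fq^*$ otherwise, which is possible since each nonzero norm fibre has $q+1\ge a+1$ elements) and the $\gamma_l$ distinct with $\Tra_{\FQ/\Fq}(\gamma_l)\notin\{0,c\}$ (the bound on $b$ leaves enough such elements). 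Then $\LM_\prec(F)=x^{a+1}y^{b-q+1}$ and $\partial_xF\le q$; each line $x=\alpha_k$ meets $\He$ in $q$ distinct $\FQ$-points and each line $y=\gamma_l$ in $q+1$ distinct $\FQ$-points (separability of $y^q+y-c$ and of $x^{q+1}-\Tra_{\FQ/\Fq}(\gamma_l)$, plus surjectivity of trace and norm), and the choice of $c$ and of the traces makes all these fibres pairwise disjoint, so $F$ cuts on $\He$ a reduced $\FQ$-divisor $D'\subseteq E$. By Proposition~\ref{gradox}, $|D'|=(a+1)q+(b-q+1)(q+1)=N-2g+1$ and $\In_\prec(I_{D'})=\langle x^{q+1},x^{a+1}y^{b-q+1},y^{b+1}\rangle$ (or $\langle x^{a+1},y^{q}\rangle$ when $b=q-1$), so $x^ay^b\in\cN(\In_\prec(I_{D'}))$; that $D'$ supports a codeword of $C_m$ then follows from Proposition~\ref{fondamentale}. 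The remaining case $a=q$ is handled identically with $F=\prod_{l=1}^{b+1}(y-\gamma_l)$, the $\gamma_l$ distinct with $\Tra_{\FQ/\Fq}(\gamma_l)\ne0$, giving $|D'|=(b+1)(q+1)=N-2g+1$ and $\In_\prec(I_{D'})=\langle x^{q+1},y^{b+1}\rangle$.

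I expect the delicate points to be, for (i), the careful verification that $\dim V_{k,D}=\ell(kP_\infty)-\ell(kP_\infty-D)$ and that the standard-monomial condition is exactly the jump $\dim V_{N,D}-\dim V_{N-1,D}=1$ (after which Riemann--Roch is automatic), and, for (ii), the transversality and rationality bookkeeping: guaranteeing that the product of lines cuts $\He$ in exactly $N-2g+1$ distinct $\FQ$-rational affine points. It is in this last step that the norm and trace fibre counts, and the inequality $b\ge q-1$ (equivalently the hypothesis $m\ge 2q^2-2q-2$), really enter.
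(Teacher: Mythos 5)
Your proposal is correct, and it splits into two halves of different character relative to the paper. For part (ii) you do essentially what the paper does: build $D'$ as the transversal intersection of $\He$ with a union of $a+1$ vertical and $b-q+1$ horizontal lines (or horizontal lines only when $a=q$), using the norm/trace fibre counts to guarantee $(a+1)q+(b-q+1)(q+1)$ distinct $\FQ$-points and Proposition~\ref{gradox} to read off $\In_\prec(I_{D'})$; the paper's case split ($b=q-2$, $b=q-1$, $b\geq q$) and yours ($a=q$ versus $a\leq q-1$) are just different parametrizations of the same family of curves, and your availability counts for the $\alpha_k$ and $\gamma_l$ check out against the constraint $x^ay^b\in\B$. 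For part (i), however, you take a genuinely different route. The paper argues purely combinatorially: it lower-bounds $\vert\cN(\In_\prec(I_D))\vert$ by counting the divisors of $x^ay^b$ together with (when $b\geq q$) those of $x^qy^{b-q}$, the latter being forced into $\cN(\In_\prec(I_D))$ via Lemma~\ref{unodeidue} and Proposition~\ref{gradox}; this is where the hypothesis $m\geq 2q^2-2q-2$ (hence $b\geq q-2$) enters. You instead translate the standard-monomial condition into the jump $\dim V_{N,D}=\dim V_{N-1,D}+1$, identify $\dim V_{k,D}$ with $\ell(kP_\infty)-\ell(kP_\infty-D)$, and conclude by non-specialty of divisors of degree $\geq 2g-1$ via Riemann--Roch. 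Your argument is correct (the translation is legitimate because, by Remark~\ref{gaps}, $\prec$ agrees with the $w$-order on $\B$ and $w$-degrees of monomials in $\B$ are pairwise distinct, so reduction modulo $I_D$ of $x^ay^b$ lands in $\B_{N-1}$ exactly when $x^ay^b\in\In_\prec(I_D)$), and it buys generality: it needs no restriction on $m$ and no case analysis, only that $N$ is a non-gap. What it gives up is the explicit description of which monomials must lie in $\cN(\In_\prec(I_D))$ (the full staircase under $x^ay^b$ and $x^qy^{b-q}$), which the paper reuses crucially in the proof of Theorem~\ref{curvagenerica}(ii); so the paper's counting is not just a lower bound but a structural statement that your Riemann--Roch argument does not replace.
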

\begin{proof}   
By Remark \ref{betamaggiore} we have that $0 \leq a \leq q$ and $b\geq q-2$. We split the proof of the first item into  three  cases:
\begin{itemize}
\item  If $b=q-2$, then $w(x^ay^b)\geq m+1$ if and only if $a=q$. 
Since  $x^a y^b=x^q y^{q-2}\in\cN(\In_\prec(I_D))$, then $\cN(\In_\prec(I_D))$ also contains all $(q+1)(q-1)$ factors of  $x^q y^{q-2}$. Therefore,      $\vert D\vert= \vert \cN(\In_\prec(I_D))\vert  \geq (q+1)(q-1)=w(x^a y^b)- (q^2-q)+1$. 
\item If  $b=q-1$, we can argue as in the previous case:   from $x^a y^{q-1}\in \cN(\In_\prec(I_D))$ we get    $\vert D\vert= \vert \cN(\In_\prec(I_D))\vert  \geq (a+1)q=w(x^a y^b)- (q^2-q)+1$. 
\item If  $b\geq q$, then    $y^q\notin \In_\prec (I_D)$. As a consequence, we know by Lemma~\ref{unodeidue}   that $x^{q+1}$ is the minimal power of $x$ in $\In_\prec(I_D)$ and we deduce    by Proposition \ref{gradox}  that  $x^q y^{b-q}\not\in\In_\prec(I_D)$.

\begin{tabular}{ccc}
\begin{minipage}{7.5cm}
 Computing the number of factors of the  two monomials   $x^a y^b$ and $x^q y^{b-q}$ we get  $\vert D\vert= \vert \cN(\In_\prec(I_D))\vert  \geq (a+1)(b+1)+(q+1)(b-q+1)-(a+1)(b-q+1)=w(x^a y^b)- (q^2-q)+1$.
\end{minipage}

&

&
\begin{minipage}{4cm}

\includegraphics[width=4cm]{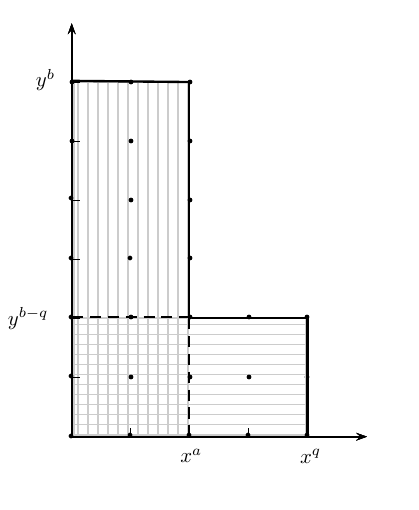}
\end{minipage}

\end{tabular}
\end{itemize}

\noindent Now we prove the second item, again splitting  in cases. In all of them we can obtain  $D'$ as the divisor cut on  $\He$ by  a  curve $\y$ union of lines  chosen among those cutting    on $\He$ an  $\FQ$-divisor. Recall that there are  $q^2$ vertical lines  $\mathcal L_i$, each containing $q$ points of $E$, and $q^2-q+1$ horizontal lines $\mathcal L'_j$,  each containing $q+1$ points of $E$   (Remark~\ref{griglia}).

\begin{itemize}
\item  If  $a=q$ and $b\in\{ q-2,q-1\}$,   $\y$ is the union of $b+1$ horizontal lines  $\mathcal L'_j$, 
 so  that $\In_\prec(I_{D'})=\langle x^{q+1},y^{b+1}\rangle$.

\item  If  $a=b=q-1$,   $\y$ is the union of $q$ vertical lines  $\mathcal L_i$,   
so that  $\In_\prec(I_{D'})=\langle x^{q},y^q\rangle$.

\item If $b\geq q$,  
 $\y$ is the union of $a+1$ vertical lines $\mathcal L_i$ and $b+1-q$ horizontal lines $\mathcal L_j'$   such that not two of them meet in a point of $E$. 
\\
For instance we can choose among the $\mathcal L_i$  a set of  lines   $x$, $x-\alpha_1, \dots, x-\alpha_a$ with $\No(\alpha_i):=\alpha_i^q= 1$ and among the $\mathcal L_j'$  a set of  lines  $y-\beta_1, \dots, y-\beta_{b+1-q}$    with $\Tra(\beta_j):=\beta_j^{q+1}-\beta_j\neq 0,1$.
 In this case, we have $\In_\prec(I_{D'})=\langle x^{q+1},y^{b+1},x^{a+1}y^{b+1-q}\rangle$.
\end{itemize}
\end{proof}
 We can now prove a general formula for the distance of the codes of the third and fourth phase.

\begin{corollary}\label{cor.dist}  The distance of an     Hermitian code $C_m$ with  $m\geq 2q^2-2q-2$   is $d=m-q^2 +q+2=m -2g+2$.
\end{corollary}
\begin{proof} By Proposition \ref{fondamentale}, if $D$  is a $\FQ$-divisor  over $\He$ corresponding to a codeword of $C_m$, then  $ \cN(\In_\prec (I_D))$ contains  a  monomial $x^a y^b$ of $\B\setminus\B_m$. 
By  Theorem~\ref{diseguaglianza}, we have  $\vert D\vert \geq w(x^a y^b)-q^2 +q+1$,  and there is a $\FQ$-divisor $D'$ over $\He$ for which we have  equality.  Therefore, the minimum $w$-degree of such  divisors, namely the distance $d$ of the code $C_m$, is that obtained when $x^a y^b$ is the monomial of $w$-degree in $\B\setminus\B_m$, that in our assumption is always $m+1$.  
\end{proof}

Note that the formula  given in the previous corollary  coincides with that of \cite{CGC-cd-book-AG_HB}, taking in account Convention~\ref{conv.m}  on the integers $m$ labeling a code. Let for  instance $C$ be  the first code   of the fourth phase (with $q$=3)  as in Example~\ref{ex32}. It corresponds to     $\mathcal B_{26}=\mathcal B_{27}$ so that it could be labeled after both values  $m=26$ and $m=27$.  We choose to label it as $C_{27}$ and the formula in Corollary \ref{cor.dist} gives  for $m=27$ the correct distance $d=23$ (while $m=26$ would not). 


\section{Geometric description of minimum weight codewords}\label{Sec.MinWord}
 
In this section we consider Hermitian codes
$C_m$  with  $2q^2-2q-2\leq m \leq n+2g-2$.

   \noindent  By Remark~\ref{betamaggiore} and Corollary~\ref{cor.dist},  there is a unique writing  of $m$ as $ \mu q+\beta(q+1)$ with  $0\leq \mu \leq q$,  $\beta \geq q-2$,  and  the distance of $C_m$ is $d=\mu q+\lambda(q+1)$ where $\lambda=\beta -q+2$. Moreover,   $\B$ contains  only one  monomial $x^a y^b$ with  $w$-degree $m+1$.\\

\begin{theorem}\label{curvagenerica} 
Let $C_m$ be an    Hermitian code, and let $m$, $\mu$, $\beta$, $\lambda$, $a$, $b$ be as above.    
\begin{enumerate}
\item[(i)] \label{curvagenerica_i} Let $D$ be a $\FQ$-divisor  cut  on  $\He$ by the curve defined by a polynomial $F$  such that $\LM_\prec (F)=x^\mu y^{\lambda}$. Then $D$  corresponds to  minimum weight codewords of $C_m$.
\item[(ii)] \label{curvagenerica_ii}
Let $D$  be a $\FQ$-divisor  on $\mathcal H$ corresponding to  minimum weight codewords of $C_m$.  Then $\cN(\In_\prec  (I_D))$ contains   only one monomial  with   $w$-degree   larger  than $m$, the monomial   $x^a y^b$,    with   $w$-degree   exactly equal to $m+1$.  Moreover $D$ is  cut  on  $\He$ by the curve defined by a polynomial $F$    such that   $\LM_\prec (F)=x^\mu y^{\lambda}$.
\end{enumerate}
\end{theorem}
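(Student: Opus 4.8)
The plan is to prove the two implications separately, using Proposition~\ref{gradox} and Theorem~\ref{diseguaglianza} as the main engines, together with the combinatorial bookkeeping of $w$-degrees in $\B$ that was set up in Remark~\ref{gaps}.

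For item~(i), suppose $F$ defines a curve $\xx$ with $\LM_\prec(F)=x^\mu y^\lambda$, and let $D=\He\cap\xx$. First I would reduce to the case $\partial_x(F)\leq q$: if $\partial_x(F)>q$, we may replace $F$ modulo $H$ by the unique representative with $x$-degree at most $q$ without changing the divisor $D$ or the leading monomial $x^\mu y^\lambda$ (here one uses that $\mu\leq q$, so $x^\mu y^\lambda$ survives the reduction as in Remark~\ref{gaps}). Then Proposition~\ref{gradox} applies and gives $|D|=\mu q+\lambda(q+1)=d$, and moreover tells us $\In_\prec(I_D)=\langle x^{q+1},x^\mu y^\lambda,y^{\lambda+q}\rangle$ (or the $s=0$ variant if $\lambda=0$). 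Now I read off $\cN(\In_\prec(I_D))$ and exhibit a monomial of $w$-degree in the window $[m+1,m+q+1]$ lying in it: the natural candidate is $x^{\mu-1}y^{\lambda+1}$ if $\mu\geq 1$ (or, when $\mu=0$, a monomial $x^{q}y^{\lambda-q}$ or similar, drawn from the list $\mathcal L_2$ of Proposition~\ref{fondamentale}), whose $w$-degree is $m+1$ by the identity $w(x^{\mu-1}y^{\lambda+1})=m-q+(q+1)=m+1$. By Proposition~\ref{fondamentale} this means $D$ contains the support of a nonzero codeword of $C_m$; since $|D|=d$ is the minimum distance (Corollary~\ref{cor.dist}), that codeword has weight exactly $d$ and its support is all of $D$.

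For item~(ii), let $D$ correspond to a minimum-weight codeword, so $|D|=d=\mu q+\lambda(q+1)$. By Proposition~\ref{fondamentale}, $\cN(\In_\prec(I_D))$ contains a monomial $x^a y^b\in\B\setminus\B_m$ with $m+1\leq w(x^ay^b)\leq m+q+1$; by Theorem~\ref{diseguaglianza}(i), $|D|\geq w(x^ay^b)-(q^2-q)+1\geq m+1-2g+1=d$, and since $|D|=d$ we must have equality throughout, forcing $w(x^ay^b)=m+1$. Uniqueness of such a monomial: any second monomial $x^{a'}y^{b'}\in\cN(\In_\prec(I_D))$ with $w$-degree $>m$ would, by the same inequality applied to it, force $|D|\geq w(x^{a'}y^{b'})-2g+1\geq d+1$ (since $w$-degrees in $\B$ are pairwise distinct by Remark~\ref{gaps}, any other monomial has strictly larger $w$-degree), a contradiction. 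The harder part is identifying the leading monomial of a defining polynomial. Since $|D|=d$ is small, $\dim V_{m,D}\leq d-1$, so there is a nonzero $G\in V_m$ vanishing on $D$, i.e.\ $G\in I_D$ with $\LM_\prec(G)=x^r y^s$ having $w$-degree $\leq m$; I then want to show one can choose $G$ with $\LM_\prec(G)=x^\mu y^\lambda$ exactly. Here I would argue via Corollary~\ref{intersezioniPinf}: any $x^ry^s\in\In_\prec(I_D)$ with $r\leq q$ satisfies $rq+s(q+1)\geq|D|=d=\mu q+\lambda(q+1)$, so the monomials of $\In_\prec(I_D)$ below $x^{q+1}$ all have $w$-degree $\geq d$; combined with Lemma~\ref{unodeidue} (which says either $x^{q+1}\in\In_\prec(I_D),\,x^q\notin\In_\prec(I_D)$, or $y^q\in\In_\prec(I_D)$) and the fact that $\cN(\In_\prec(I_D))$ has exactly $d$ elements and contains $x^{\mu-1}y^{\lambda+1}$ (equivalently the unique monomial of $w$-degree $m+1$), a counting argument pins down $\In_\prec(I_D)=\langle x^{q+1},x^\mu y^\lambda, y^{\lambda+q}\rangle$ (or the appropriate degenerate form). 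Then any $G\in I_D$ realizing $x^\mu y^\lambda$ as leading monomial has degree $\mu+\lambda$ and, by Proposition~\ref{gradox}, cuts exactly $D$ on $\He$; taking $F=G$ finishes the proof.

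The main obstacle I anticipate is exactly this last step of item~(ii): showing that the initial ideal $\In_\prec(I_D)$ is forced into the precise shape $\langle x^{q+1},x^\mu y^\lambda,y^{\lambda+q}\rangle$, rather than merely containing \emph{some} monomial of the right $w$-degree. This requires carefully combining the lower bounds from Corollary~\ref{intersezioniPinf} on every generator with the exact count $|\cN(\In_\prec(I_D))|=d$ and the case analysis of Lemma~\ref{unodeidue}, and handling separately the boundary cases $\mu=0$, $\mu=q$, $\lambda=0$, and $\lambda=q-1$ (the fourth-phase corner), where the monomial $x^\mu y^\lambda$ sits against the walls of $\B$ and the shape of the initial ideal degenerates.
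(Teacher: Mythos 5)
Your overall strategy coincides with the paper's (Proposition~\ref{gradox} for part~(i); Theorem~\ref{diseguaglianza} plus a count of $\cN(\In_\prec(I_D))$ for part~(ii)), and your preliminary reduction to $\partial_x(F)\leq q$ in part~(i) is a legitimate point that the paper glosses over. However, there is a concrete error that propagates through both parts: the monomial of $w$-degree $m+1$ is \emph{not} $x^{\mu-1}y^{\lambda+1}$. That monomial has $w$-degree $(\mu-1)q+(\lambda+1)(q+1)=d+1=m-2g+3$, which is strictly less than $m+1$ for every $q\geq 3$; you have implicitly substituted $d$ for $m$. Since $m=\mu q+\beta(q+1)$ with $\beta=\lambda+q-2$, the monomial of $w$-degree $m+1$ is $x^{\mu-1}y^{\beta+1}=x^{\mu-1}y^{\lambda+q-1}$ when $\mu\geq1$ (and $x^{q}y^{\lambda-1}$ when $\mu=0$). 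As written, part~(i) fails at the decisive step: the witness you exhibit does not lie in the window $[m+1,m+q+1]$, so Proposition~\ref{fondamentale} cannot be invoked. The fix is immediate (the correct monomial does lie in $\cN(\langle x^{q+1},x^\mu y^\lambda,y^{\lambda+q}\rangle)$), but the identity you state is false.

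The same misidentification undermines the step you yourself flag as the main obstacle in part~(ii), and here the gap is not merely cosmetic. The paper closes this step by a factor-counting argument that only works with the correct corner monomial: $\cN(\In_\prec(I_D))$ is closed under taking factors, so it contains all $\mu(\lambda+q)$ factors of $x^{\mu-1}y^{\lambda+q-1}$; moreover it must also contain $x^{q}y^{\lambda-1}$ (otherwise Proposition~\ref{gradox} would force $y^{\lambda+q-1}$, hence $x^{\mu-1}y^{\lambda+q-1}$, into $\In_\prec(I_D)$); the union of the factor-sets of these two corners has exactly $d$ elements, which equals $\vert\cN(\In_\prec(I_D))\vert$, so $\cN(\In_\prec(I_D))$ is precisely this staircase, $x^\mu y^\lambda$ (which divides neither corner) lies in $\In_\prec(I_D)$, and uniqueness of the monomial of $w$-degree exceeding $m$ is read off directly. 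With your monomial $x^{\mu-1}y^{\lambda+1}$ the factor count is only $\mu(\lambda+2)$, far short of $d$, and no combination of Corollary~\ref{intersezioniPinf} and Lemma~\ref{unodeidue} will pin down the initial ideal. A secondary, smaller issue: your uniqueness argument applies Theorem~\ref{diseguaglianza}(i) to a hypothetical second monomial of $w$-degree $>m$, but that theorem requires the $w$-degree to be at most $m+q+1$; monomials of larger $w$-degree need a separate descent argument (or, as in the paper, uniqueness should be extracted from the exact determination of $\cN(\In_\prec(I_D))$).
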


\begin{proof} 
\begin{enumerate}
    \item[(i)]  By Proposition \ref{gradox},  if   $D$ is a $\FQ$-divisor  cut  on  $\He$ by the curve defined by a polynomial $F$  such that  $\LM_\prec (F)=x^\mu y^{\lambda}$, then $|D|=\mu q+\lambda (q+1)$. Moreover,  if $\lambda\geq 1$,  then $\LM_\prec (\langle H,F \rangle )=\langle x^{q+1},x^\mu y^\lambda,y^{\lambda+q}\rangle $;  otherwise  $\lambda=0$  and  $\In_\prec (\langle H,F \rangle)=\langle x^\mu,y^q\rangle$.
In both cases, $\cN(\In_\prec (I_D))$ contains   a monomial of $w$-degree $m+1$. By Proposition \ref{fondamentale},  $D$ corresponds to codewords of weight $|D|=d$.
    \item[(ii)]   It is easy to see that   $x^a y^b=x^{\mu-1}y^{\lambda+q-1}$  if $\mu\geq 1$, and  $x^a y^b=x^q y^{\lambda-1}$ if $\mu=0$. Moreover, by Theorem \ref{diseguaglianza}, among the $d$ monomials of $ \cN(\In_\prec (I_D))$ we find $x^a y^b $ and all its factors: then  $d\geq \mu (\lambda+q)$ if $\mu \geq 1$ and $d\geq (q+1)\lambda$ if $\mu=0$.  
\begin{center}
\begin{figure}[ht!]
  \hspace{2cm}
\includegraphics[height=4cm]{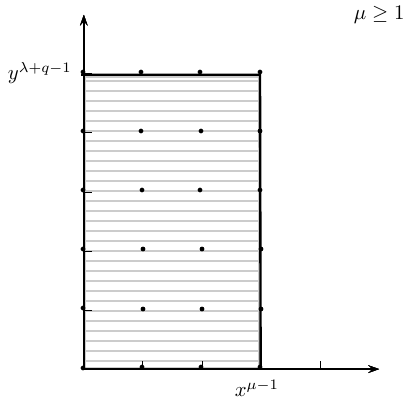}
  \hspace{2cm}
\includegraphics[height=4cm]{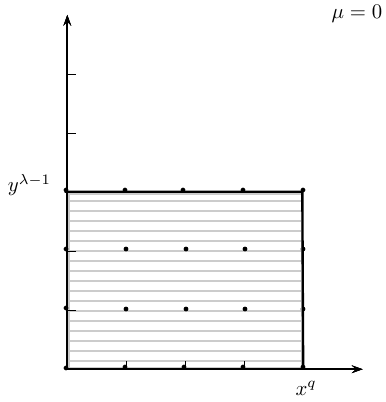}
  \caption{The divisors of $x^{\mu-1}y^{\lambda+q-1}$ when $\mu\geq 1$ and of $x^q y^{\lambda-1}$ if $\mu=0$.}
  \label{fig.div1}
\end{figure}
\end{center}
We consider tree different cases:
\begin{itemize}
\item Case $\mu=0$.  Since $d=\lambda(q+1)$,  the number  of  monomials dividing  $x^a y^b$ are exactly  as many as the monomials in  $\cN(\In_\prec (I_D))$,  hence   $\cN(\In_\prec (I_D))$ is exactly the set of these monomials.  Therefore,  there exist a polynomial $F$ in $I_D$ such that $LM_\prec(F)=y^\lambda $. By Corollary \ref{intersezioniPinf} this polynomial  cuts on $\He$ a divisor $D'$ which contains $D$   and  has the same degree of $D$, so $D'=D$.

\item Case $\mu,\lambda >0$. We observe that  $x^q y^{\lambda-1}$ must belong to $\cN(\In_\prec (I_D))$. Otherwise,  by Proposition~\ref{gradox}, we would $y^{\lambda+q-1}\in \In(I_D)$ and so also    $x^{\mu-1}y^{\lambda+q-1}\in \In_\prec (I_D)$, against   what  we have just  proved.   
\begin{figure}[ht!]
  \centering
\includegraphics[height=5cm]{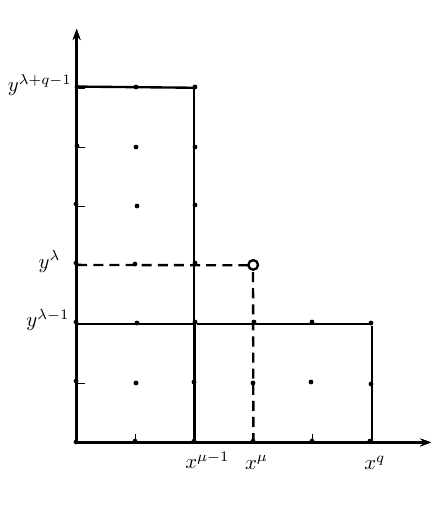}
  \caption{The number of monomials in   $\cN(\In_\prec (\langle H, F\rangle ))$.}
  \label{fig.div3}
 \end{figure}
Computing the number of the  divisors of  $x^{\mu-1}y^{\lambda+q-1}$  and of  $x^q y^{\lambda-1}$, we get  exactly $d$ (see Figure \ref{fig.div3}). Therefore, $ \cN(\In_\prec (I_D))$ is formed by these monomials, since $ \vert \cN(\In_\prec (I_D))\vert =d$. Moreover $x^\mu y^\lambda$, that does not divide neither $x^{\mu-1}y^{\lambda+q-1}$ nor $x^q y^{\lambda-1}$,  belongs to $\In_\prec (I_D)$. Then, there is a polynomial $F\in I_D$ with leading monomial  $x^\mu y^\lambda$. By Corollary \ref{intersezioniPinf}  the curve defined by  $F$ cuts $D$ on $\He$.

\item Case $\lambda=0$.   The monomial $x^a y^b=x^{\mu-1}y^{q-1}$    has exactly   $\mu q=d$ divisors. So  $\cN(\In_\prec (I_D))$ is formed by  these monomials.  Therefore $x^\mu $ is an element of  $\In_\prec (I_D))$. If $F\in I_D $ has leading monomial $x^\mu$, by Proposition  \ref{gradox},   $I_D$ contains   the ideal generated by $H$ and $F$ whose initial ideal is $( x^\mu, y^q)$.  So $I_D=( H,F )$ and the curve defined by  $F$ cuts $D$ on $\He$.
\end{itemize}
\end{enumerate}
\end{proof}

\begin{remark} 
Let $C_m$ be the Hermitian code with distance $d=\mu q+\lambda(q+1)$. As shown in the proof of Theorem \ref{diseguaglianza}, 
we obtain some  divisor $D$ corresponding to a minimum-weight codeword of $C_m$ cutting $\mathcal H$ with a suitable curve $\xx$ union  of  $\mu$ vertical lines and $\lambda$ horizontal  lines, so that 
the leading monomial of the polynomial defining such a curve $\xx$ is indeed $x^\mu y^\lambda$.\\
On the other hand, we point out that not  all the polynomials $F$ in $\FQ[x,y]$ with leading monomial  $x^\mu y^{\lambda} $ correspond to  minimum-weight codewords of the code $C_m$. The following result contains an explicit characterization of the \lq\lq good\rq\rq polynomials  as a consequence of  Remark \ref{rm:eqcampo}. 
\end{remark}

\begin{corollary}\label{corollfin}
Let $C_m$ be the Hermitian code with  distance $d=\mu q+\lambda(q+1)$. 
A polynomial $F$ over $\FQ$ with $\LM_\prec (F)=x^\mu y^{\lambda}$ cuts over $\He$ a divisor $D$ corresponding to  minimum-weight codewords if and only if  the ideal $\langle H,F\rangle$ contains the field equations $x^{q^2}-x$ and $y^{q^2}-y$.
\end{corollary}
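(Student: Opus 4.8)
The plan is to derive both implications from Theorem~\ref{curvagenerica}(i), the degree and initial-ideal computation of Proposition~\ref{gradox}, and the characterization of reduced ideals of $\FQ$-points recalled in Remark~\ref{rm:eqcampo} (Seidenberg's lemma). As a preliminary normalization I would replace $F$ by its reduction $\tilde F$ modulo $H$ with respect to $\prec$; this changes neither the ideal $\langle H,F\rangle$ nor the leading monomial, since $\In_\prec(\langle H\rangle)=\langle x^{q+1}\rangle$ and $x^\mu y^\lambda\notin\langle x^{q+1}\rangle$ because $\mu\le q$, so $\LM_\prec(\tilde F)=x^\mu y^\lambda$ and $\partial_x(\tilde F)\le q$. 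Hence we may assume $\partial_x(F)\le q$ and apply Proposition~\ref{gradox}: $\In_\prec(\langle H,F\rangle)$ is $\langle x^{q+1},x^\mu y^\lambda,y^{\lambda+q}\rangle$ (if $\lambda\ge 1$) or $\langle x^\mu,y^q\rangle$ (if $\lambda=0$), and in either case $\dim_\FQ\FQ[x,y]/\langle H,F\rangle=\mu q+\lambda(q+1)=d$, the common degree also being that of the divisor cut on $\He$ by $\xx=\mathcal{V}(F)$.

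For the ``if'' direction, assume $x^{q^2}-x,y^{q^2}-y\in\langle H,F\rangle$. By Remark~\ref{rm:eqcampo} the ideal $\langle H,F\rangle$ is then radical and coincides with $I_D$, where $D$ is the set of its zeros, all of which are simple points with coordinates in $\FQ$; since $H\in\langle H,F\rangle$, each point of $D$ lies on $\He$, so $D$ is an $\FQ$-divisor over $\He$ in the sense of Definition~\ref{defdivisori}, with $|D|=d$ by the dimension count above. As $D$ is the divisor cut on $\He$ by the curve $\xx$ whose defining polynomial has $\LM_\prec(F)=x^\mu y^\lambda$, Theorem~\ref{curvagenerica}(i) gives that $D$ corresponds to minimum-weight codewords of $C_m$.

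For the ``only if'' direction, let $D$ be the divisor cut on $\He$ by $\xx$, and assume it corresponds to minimum-weight codewords; then $D$ is a reduced $\FQ$-divisor on $\He$ with $|D|=d$, the distance of $C_m$ by Corollary~\ref{cor.dist}. Since $D$ is the common zero set of $H$ and $F$, we have $\langle H,F\rangle\subseteq I_D$, so there is an induced surjection of $\FQ$-algebras $\FQ[x,y]/\langle H,F\rangle\twoheadrightarrow\FQ[x,y]/I_D=A_D$. Its source has dimension $d$ by Proposition~\ref{gradox} and its target has dimension $|D|=d$ (the evaluation map at the $d$ distinct $\FQ$-points of $D$ being an isomorphism $A_D\cong\FQ^{|D|}$), so the surjection is an isomorphism and $\langle H,F\rangle=I_D$. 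Finally $D\subseteq E$ yields $I_E\subseteq I_D=\langle H,F\rangle$, and $x^{q^2}-x,y^{q^2}-y\in I_E$ by Remark~\ref{rm:eqcampo}, whence these field equations belong to $\langle H,F\rangle$.

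The only point requiring some care is the opening reduction of $F$ modulo $H$, where one must check that the leading monomial $x^\mu y^\lambda$ survives; this is immediate from $\mu\le q$ as noted above. Apart from that, the argument is a bookkeeping combination of the degree count of Proposition~\ref{gradox}, Seidenberg's lemma in the form of Remark~\ref{rm:eqcampo}, and the sufficiency statement Theorem~\ref{curvagenerica}(i), so I do not expect any serious obstacle.
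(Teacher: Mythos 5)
Your proof is correct and follows exactly the route the paper intends: the corollary is stated there without proof, as an immediate consequence of Remark~\ref{rm:eqcampo} (Seidenberg) combined with Proposition~\ref{gradox} and Theorem~\ref{curvagenerica}(i), which is precisely the combination you carry out. Your only addition is the opening reduction of $F$ modulo $H$ to ensure $\partial_x(F)\le q$ before invoking Proposition~\ref{gradox}; that is a genuine (and correct) point of care rather than a different approach.
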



\section{Algorithm to compute the number of minimum-weight codewords}\label{Sec.alg}

Let $C_m$ be an Hermitian code of either third or fourth phase. Based on the results obtained in the previous sections, and mainly on Theorem \ref{curvagenerica}, we now describe   an algorithm computing the number  $\texttt{MW}_m$   of  minimum-weight codewords of $C_m$ can be built.    Then,    we propose Algorithm~\ref{Alg} which takes as inputs
 $q,m$, and returns  $\texttt{MW}_m$.

We know  that  the integer $m$ such that  $C_m$ is a  code in ether  the third or the  fourth phase  can be written as   $m=\mu q+ (\lambda+q-2  )(q+1)$ with $\mu$, $\lambda$ non-negative integers and  $\mu\leq q$; moreover we know a formula giving   the distance of $C_m$ as a function of  $\lambda$ and $\mu$.   \ecr We identify the code $C_m$ with the triple of integers $(q, \lambda, \mu)$. We recall that $\prec$ denotes the term ordering $\mathtt{DegRevLex}$  in $\FQ[x,y]$ with $y>x$.\ecr 

\begin{itemize}
\item  We compute  the distance $d=\mu q+ \lambda(q+1)$ of the code $C_m$. 
\item We compute the set $M$ of the pairs $(a,b)$ of exponents of the  monomials $x^a y^b$  such that $x^a y^b \prec x^\mu y^{\lambda}$ and $a\leq q$. 
\item Now we exploit Theorem \ref{curvagenerica}. For every  monomial $(a,b)$ in $M$, we introduce a  variable  $\nu_{a,b}$. Then we set $$f:=x^\mu y^{\lambda}+\sum_{(a,b)\in M} \nu_{a,b}  x^a y^b.$$
  (line \ref{step.poly} of Algorithm~\ref{Alg}). 

\item We compute the Gr\"obner  basis $\mathcal G(I)$  of the ideal $I=\langle f,H\rangle$ in $\FQ[T, x,y]$,   where  $T=\{ \nu_{a,b} \}_{(a,b)\in M}$, w.r.t a block term ordering that is  $\mathtt{DegRevLex}$ with $y>x$ for the first block $\{y,x\}$  and is any term ordering on  the second block $T$.   Note that, by Proposition \ref{gradox}  we obtain a Gr\"obner basis $\mathcal G(I)$  formed by two or three polynomials whose leading monomials only contains the variables $x,y$    (line \ref{step.GB} of Algorithm~\ref{Alg}).

 \item We compute the normal forms $N_x$ of $x^{q^2}-x$ and $N_y$ of $y^{q^2}-y$ with respect to $\mathcal G(I)$  (lines \ref{step.NF1}-\ref{step.NF2} of  Algorithm~\ref{Alg}).     By Remark \ref{rm:eqcampo}, a  specialization  of the coefficients $\nu_{a,b}$ in $\FQ$  corresponds to a curve that cuts on $\He$ an $\FQ$-divisor if and only if under  this specialization  the ideal $I$ contains the polynomials  $x^{q^2}-x$ and $y^{q^2}-y$, hence if and only if the  normal forms $N_x$ and $N_y$  vanish.

\item We collect the two normal forms w.r.t.  the variables $x,y$ and coefficients  in $\FQ[T]$ and construct  the ideal $J$ in $\FQ[T]  $  generated by the coefficients of the two normal forms and by the field equations $\nu_{a,b}^{q^2}-\nu_{a,b}$ for every $\nu_{a,b}\in T$ (line \ref{step.J} of Algorithm~\ref{Alg}).

\item  By Remark \ref{rm:eqcampo},   $J$ is the reduced ideal   of a finite set of points $\mathcal V(J)$ in $\mathbb{A}_{\FQ}^N$,  where   $N=\vert T\vert$.   We compute the cardinality of $\mathcal V(J)$ computing either all its elements or the constant $z$ which is the   affine Hilbert polynomial  of $\FQ[T]/J$   (line  \ref{step.HP} of Algorithm~\ref{Alg}). 

 \item The set of points in  $\mathcal V(J)$   corresponds to the set of  minimum-weight codewords of $C_m$. More precisely,  to each point in $\mathcal V(J)$ we associate an homogeneous  linear system   whose solutions form  a  $1$-dimensional $\FQ$-vector space and  (except the zero one)  are  minimum-weight codewords of $C_m$.
Therefore,  the number     $\texttt{MW}_m$   of minimum-weight codewords of $C_m$ is $\vert \mathcal V(J)\vert \cdot (q^2-1)=z\cdot  (q^2-1)$  (line  \ref{step.num} of Algorithm~\ref{Alg}). 
\end{itemize}

Let us suppose that the following functions are available (they are present in the libraries for most computer algebra softwares as for instance MAGMA):
\begin{itemize}
\item[-] \texttt{GroebnerBasis}$(I)$ computing the Gr\"obner  basis $\mathcal G(I)$  of the ideal $I$ w.r.t. the term ordering $\mathtt{DegRevLex}$.
\item[-]  \texttt{NormalForm}$(f,G)$ computing the normal form of $f$ with respect to the Gr\"obner  basis $G$.

\item[-]   \texttt{HilbertPolynomial}$(I)$ computing the affine  Hilbert polynomial\begin{footnote}{In our implementation we  obtain the affine Hilbert polynomial of the ideal $I$ exploiting the MAGMA commands that compute the initial ideal $\In(I)$ with respect $\mathtt{DegRevLex}$ and  then the homogeneous Hilbert polynomial of $\In(I)$ with an additional variable.}
\end{footnote}  of a
zero-dimen\-sional ideal $I$.
\end{itemize}

\begin{algorithm}
	\caption{Compute the number of minimum-weight codewords}
	\label{Alg}
	\begin{algorithmic}[1]
		\Function{MinimumWords}{$ q,\lambda, \mu$}
		
				\State $d \gets \mu q + \lambda(q+1)$ 
				\State $M \gets \{(a,b) \mid a,b\in \NN,   a\leq q, \,a q + b(q+1)< d \}$  
		\State \label{step.poly}  $f \gets x^\mu y^\lambda+ \sum_{(a,b) \in M} \nu_{a,b}x^{a} y^{b} $ 
		\State $I \gets \langle H, f\rangle$ 
		\State \label{step.GB} $G\gets \mathtt{GroebnerBasis}(I)$ 
		\State \label{step.NF1} $N_x=\sum_{k,j}\beta_{k,j}x^ky^j\gets \mathtt{NormalForm}(x^{q^2}-x,G)$
		\State \label{step.NF2} $N_y=\sum_{\iota,\kappa}\gamma_{\iota,\kappa}x^\iota y^\kappa \gets \mathtt{NormalForm}(y^{q^2}-y,G)$ 
		\State \label{step.J} $J \gets \langle \{\beta_{k,j}\}_{k,j}, \{\gamma_{\iota,\kappa}\}_{\iota, \kappa}, \{ \nu_{a,b}^{q^2}-\nu_{a,b}\}_{(a,b)\in M}\rangle$ 
		\State  \label{step.HP} $z\gets \mathtt{HilbertPolynomial}(J)$ 
		\State \label{step.num}  $  \texttt{MW}_m   \gets z\cdot (q^2-1)$ 
		\State
		\Return   $\texttt{MW}_m$.
		\EndFunction
	\end{algorithmic}
\end{algorithm}

We have implemented this algorithm using the MAGMA software.  In MAGMA is already present a function  \texttt{MinimumWords} computing  the number of  minimum-weight codewords. We show in Table~\ref{Tab.q3.3fase} and in Table~\ref{Tab.q4.3fase}  the   time needed to compute this number   for   some codes of the third and fourth phase   using  the two  algorithms. 
For the  computations we used  8 Intel(R) Xeon(R) CPU  X5460  $@$ 3.16GHz, 4 core, 32GB of RAM with  MAGMA version V2.22-5.

\begin{table}[h]
\caption{Comparison time to compute the number of minimum-weight codewords of $C_m$ in the third ($10\leq m\leq 25$) and in fourth phase ($m=27,28$)  with $q=3$, using Algorithm \ref{Alg} and MAGMA command \texttt{MinimumWords}.} \label{Tab.q3.3fase} 
\centering 
\begin{tabular}{|c|c|c|c|c|}
\hline
$m$ & $d$ &   $\texttt{MW}_m$ & Algorithm \ref{Alg} & \texttt{MinimumWords}\\
\hline\hline
$10$ & $6$ & $576$ & $\mathbf{0.000}$ \textbf s & $15.100$ s \\
\hline
$11$ & $7$ & $2160$ & $\mathbf{0.020}$ \textbf s & $163.420$ s \\
\hline
$12$ & $8$ & $5400$ & $\mathbf{0.250}$ \textbf s & $191.560$ s \\
\hline
$13$ & $9$ & $8448$ & $\mathbf{2.120}$ \textbf  s & $68.100$ s \\
\hline
$14$ & $10$ & $17280$ & $9.920$ s & $\mathbf{7.040}$ \textbf  s \\
\hline
$15$ & $11$ & $24408$ & $119.330$ s & $\mathbf{2.440}$ \textbf  s \\
\hline
$16$ & $12$ & $32544$ & $467.680$ s & $\mathbf{1.450}$ \textbf  s \\
\hline
$17$ & $13$ & $39744$ & $1478.540$ s & $\mathbf{1.620}$ \textbf s \\
\hline
$18$ & $14$ & $39744$ & $1892.300$ s & $\mathbf{1.230}$ \textbf  s \\
\hline
$19$ & $15$ & $32544$ & $2073.710$ s  & $\mathbf{0.300}$ \textbf  s \\
\hline
$20$ & $16$ & $24408$ & $3052.910$ s  & $\mathbf{0.050}$ \textbf  s \\
\hline
$21$ & $17$ & $17280$ & $1314.380$ s  & $\mathbf{0.040}$ \textbf  s \\
\hline
$22$ & $18$ & $8448$ & $133.390$ s  & $\mathbf{0.020}$ \textbf  s \\
\hline
$23$ & $19$ & $5400$ & $20.590$ s  & $\mathbf{0.010}$ \textbf  s\\
\hline
$24$ & $20$ & $2160$ & $9.120$ s  & $\mathbf{0.010}$ \textbf  s\\
\hline
$25$ & $21$ & $576$ & $0.470$ s  & $\mathbf{0.010}$ \textbf  s\\
\hline
\hline
$27$ & $23$ & $432$ & $0.180$  s & $\mathbf{0.010}$ \textbf  s \\
\hline
$28$ & $24$ & $72$ & $\mathbf{0.000}$  \textbf  s & $\mathbf{0.000}$ \textbf  s \\
\hline
\end{tabular}
\end{table}

\begin{table}[ht]
\caption{Comparison time to compute the number of minimum-weight codewords of $C_{2q^2-2q-2}$  with $q=4$, using Algorithm \ref{Alg} and MAGMA command \texttt{MinimumWords}.} \label{Tab.q4.3fase} 
\centering 
\begin{tabular}{|c|c|c|c|c|}
\hline
$m$ & $d$ &   $\texttt{MW}_m$ & Algorithm \ref{Alg} & \texttt{MinimumWords}\\
\hline\hline
$22$ & $12$ & $150000$ & $\mathbf{85.87}$ \textbf s & Termination predicted at  $10^{16}$ s ($10^8$ y) \\
\hline
\end{tabular}
\end{table}

Note that for the first codes of the third phase, namely the codes $C_m$ having lowest $m$, our algorithm  is more convenient  than  MAGMA command  \texttt{MinimumWords}. In fact, it improves the running times needing to compute the number of minimum-weight codewords. 
A striking case is that of $q=4$ and $m=22$   (Table \ref{Tab.q4.3fase}), where our algorithm can get the wanted number in  $85.87$ \ecr seconds while \texttt{MinimumWords}  declares that termination requires $10^8$ years.
The fact that our algorithm  shows  a better performance \ecr for low values of $m$ bodes well for the codes of the second phase.

\section{Conclusions and further research}\label{Sec.con}

The keystone to finding a geometrical characterization of any minimum-weight codewords for the third and fourth phase was the choice of a different term ordering with respect to what can be usually found in literature, that is the $\mathtt{DegRevLex}$ with $y>x$.

The main novel aspect of our approach was indeed in this  deeper attention   to the  geometry  of the problem.  For instance, we constructed   monomial bases for the quotient ring $A_E$ and special subsets of these bases gave rise to the   parity-check matrices of the Hermitian codes. These sets of monomials are intrinsically connected to the Hermitian codes themselves, hence we can find them  in most papers on the topic. However,  the methods used to work with these sets of monomials can be  strikingly different. For instance  in  \cite{CGC-cd-art-marcolla2015small} the authors deal with  the problem of computing the number of minimum-weight codewords for   the  Hermitian codes with distance $d\leq q$ exploiting  peculiar  methods in linear algebra, as Vandermonde matrices.  They find that the minimum-weight codewords are complete intersection of $\He$ and a line of either type $y=ax+b$ or $x=a$. The methods used in \cite{CGC-cd-art-marcolla2015small} appear to be not sufficient to deal with codes with distances larger than the ones considered in their article, as they state in the conclusion of the paper.

\lq\lq \textit{As regards the other phases, it seems that only a part of the second phase
	can be described in a similar way. Therefore, probably a radically different
	approach is needed for phase-3, 4 codes in order to determine their weight distribution completely. Alas, we have no suggestions as to how reach this.}\rq\rq

In \cite{marcollaroggero2016minimum} we have tested our new geometrical approach for codes of phases 1-2, both from a theoretical and a computational point of view, while in the present paper we have tested it for codes of phases 3-4, thus developing the \emph{radically different approach} sought-after in  \cite{CGC-cd-art-marcolla2015small}.
We are also confident that this could be a powerful tool also to study the weights distribution of Hermitian codes and to design an efficient algorithm of error detection and correction.

Moreover, there are evidences that complete intersections can provide a good description of codewords with a small weight.  \\
\ecr


\section*{Acknowledgements}

The authors would like to thank M. Sala for the discussions and his comments.\\

\section*{Bibliography}
\bibliographystyle{amsplain}

\bibliography{BibChiMarg_giugno2018}

\end{document}